\definecolor{rltred}{rgb}{0.75,0,0}
\definecolor{rltgreen}{rgb}{0,0.5,0}
\definecolor{rltblue}{rgb}{0,0,0.75}
\DeclareMathOperator{\spt}{supp}
\newcommand{\intO}{\int_\Omega{}}
\newcommand{\T}{\bS}
\newcommand{\otimess}{\overset{s}{\otimes}}
\newcommand{\R}{\mathbb{R}}
\newcommand{\N}{\mathbb{N}}
\newcommand{\Z}{\mathbb{Z}}
\renewcommand{\div}{\operatorname{div}}
\newcommand{\Du}{\mathbf{Du}}
\newcommand{\Dw}{\mathbf{Dw}}
\newcommand{\eps}{\varepsilon}
\newcommand{\dx}{\,\mathrm{d}\mathbf{x}}
\newcommand{\dy}{\,\mathrm{d}\mathbf{y}}
\newcommand{\dz}{\,\mathrm{d}\mathbf{z}}
\newcommand{\ds}{\,\mathrm{d}s}
\newcommand{\dxi}{\,\mathrm{d}\xi}
\newcommand{\pv}{\mathrm{p.v.}}
\newcommand{\x}{\mathbf{x}}
\newcommand{\y}{\mathbf{y}}
\renewcommand{\u}{\mathbf{u}}
\renewcommand{\vv}{\mathbf{v}}
\newcommand{\w}{\mathbf{w}}
\newcommand{\f}{\mathbf{f}}
\newcommand{\z}{\mathbf{z}}
\newcommand{\xa}{\mathbf{a}}
\newcommand{\xb}{\mathbf{b}}
\newcommand{\B}{\mathcal{B}}
\newcommand{\Bf}{\mathcal{B}f}
\newcommand{\dhp}{d^+_{h,k}}
\newcommand{\dhm}{d^-_{h,k}}
\newcommand{\dhpm}{d^\pm_{h,k}}
\newcommand{\hei}{h\mathbf{e}^k}
\newcommand{\Coi}{C_0^\infty}
\newcommand{\Cooi}{C_{0,0}^\infty}
\newcommand{\supp}{\operatorname{supp}\,}
\newcommand{\dist}{\operatorname{dist}\,}
\newcommand{\diam}{\operatorname{diam}}
\newtheorem{thm}[equation]{Theorem}
\newtheorem{lem}[equation]{Lemma}
\newtheorem{prop}[equation]{Proposition}
\newtheorem{cor}[equation]{Corollary}
\newtheorem{df}[equation]{Definition}
\newtheorem{rem}[equation]{Remark}
\numberwithin{equation}{section}
\begin{document}

\title{Solenoidal difference quotients and their application to the
  regularity theory of the $p$-Stokes system}


\titlerunning{Solenoidal difference quotients and regularity of the $p$-Stokes system}        

\author{Martin K\v repela
  \and
Michael R\r u\v zi\v cka}


\institute{Martin K\r repela \at
Department of Applied Mathematics, University of  
Freiburg, Ernst-Zermelo-Stra\ss e~1, D-79104 Freiburg, Germany \\ 
              \email{martin.krepela@math.uni-freiburg.de}           
           \and
       Michael R\r u\v zi\v cka{}\at 
  Department of Applied Mathematics, University of 
  Freiburg, Ernst-Zermelo-Stra\ss e~1, D-79104 Freiburg, Germany\\
\email{rose@mathematik.uni-freiburg.de}
}

\date{Received: date / Accepted: date}

\maketitle

\begin{abstract}
  We prove existence of a~solution to the divergence equation
  satisfying a~new Bogovski-type estimate for the difference
  quotients. This enables us to give an~alternative proof of the
  interior regularity of the solution to the $p$-Stokes problem,
  completely avoiding the pressure. Moreover, as a~key preliminary
  result we prove boundedness of Calder\'on-Zygmnud operators with
  standard kernels in weighted Lebesgue and Orlicz spaces over
  a~general domain.
 \keywords{divergence equation \and difference quotient \and Calder\'on--Zygmund operator \and
   regularity \and $p$-Stokes system}
 \subclass{42B20, 42B37,  35B65, 35B45, 35J92}
\end{abstract}

\section{Introduction}
We are concerned with the question of interior regularity of the weak
solution of the steady Stokes approximation for flows of shear
thinning fluids. It is given by
\begin{equation}
  \label{eq}
  \begin{aligned}
    -\divo \bfS(\bfD\bu)+\nabla \pi&=\bff\quad&&\text{in }\Omega,
    \\
    \divo\bu&=0\quad&&\text{in }\Omega,
    \\
    \bfu &= \bfzero &&\text{on } \partial \Omega,
  \end{aligned}
\end{equation}
where $\Omega \subset \R^n$, $n\ge 2$, is a~bounded domain. In here,
$\bu=(u^1,\ldots,u^n)^\top$ denotes the unknown velocity vector field,
and $\pi$ the unknown scalar pressure, while the external body force
$\bff=(f^1,\ldots,f^n)^\top$ is given. The extra stress tensor $\bfS$
depends only on $\bfD\bu:=\tfrac 12(\nabla\bu+\nabla\bu^\top)$, the
symmetric part of the velocity gradient $\nabla \bu$. The relevant
example we have in mind is 
\begin{equation}
  \label{eq:stress}
  \bfS(\bD \bu) = \mu (\delta+\abs{\bD \bu})^{p-2}
  \bD\bu\,, 
\end{equation}
with $p \in\, (1,2]$, $\delta \ge 0$, and $\mu >0$. Notice that despite
various efforts the optimal global regularity of this problem is still
open (see \cite{crispo-2009,hugo-thin-nonflat,br-reg-shearthin} for partial results). However, the interior regularity we focus on here
is well-known (see \cite{fuse,mrs,br-reg-shearthin}). The
standard proof uses localized difference quotients in each
direction. Due to the localization, the corresponding test function is
not solenoidal anymore, therefore appropriate properties of the pressure
have to be used. 

Here we modify this approach and use a~solenoidal
version of the localized difference quotient. Thus, we can completely
avoid the pressure~$\pi$. This is a~completely new approach even for the
classical Stokes problem, i.e.,~$p=2$ in \eqref{eq:stress}. To make it possible, we show that the solution of the divergence equation
obtained via the Bogovski formula (see~\cite{bo1,bo2})
satisfies an additional estimate for the difference quotient. The
proof of this is based on estimates of singular Calder\'on-Zygmund
operators generated by standard kernels in arbitrary domains $\Omega\subset \R^n$
(see~Theorem~\ref{CZ-Orlicz}). This result is of independent interest
since it shows that an~analogy of the classical
Calder\'on-Zygmund estimates (cf.~\cite{CZ56,grafakos-c}) in the whole space also
hold in arbitrary domains. To prove it, we employ ideas
from \cite{bo1,bo2,galdi-book-old-1,dr-calderon,katrin-diss}, where the divergence equation is treated, 
and modify them for our purposes.
An important feature of our result is a~careful
tracking of the dependence of the constants on various quantities,
which is missing in the literature.

\section{Preliminaries}
\subsection{Notation}
Throughout the text, we use the symbols $C,c$ to denote generic constants which may change from line
to line but are independent of ``crucial'' quantities. In many cases, the dependence of such constants on various quantities will be explicitly specified. 
Furthermore, we write $f\sim g$ if there exist constants $c,C>0$ such
that $c\, f \le g\le C\, f$. 

A~set $Q\subset\R^n$ is called an \emph{(open) cube} if there exist $a_1,\ldots,a_n\in\R$ and $\ell(Q)>0$ such that
$$
   Q = \{ \x\in\R^n \fdg a_i < x_i < a_i + \ell(Q),\, i=1,\ldots,n \}.
$$
The number $\ell(Q)$ is the \emph{side length} of $Q$
and the point
$\bc:=(a_1+ \frac 12\ell(Q), \ldots, a_n+ \frac 12\ell(Q) )^\top$ is
the center of the cube $Q$, which can be also denoted by
$Q=Q(\bc, {\ell(Q)} )$. For $\alpha>0$ and a~cube
$Q=Q(\bc, {\ell(Q)} )$ we use the notation $\alpha\, Q:=Q(\bc, \alpha {\ell(Q)})$. 
Moreover, a~cube $Q$ is said to be \emph{dyadic} if there exist
$j_i,\ldots,j_n,k\in\Z$ such that
	$$
		Q = \left\{ \x\in\R^n \fdg j_i 2^k < x_i < (j_i+1)2^k,\, i=1,\ldots,n\right \}.
	$$
Notice that, in the above definition, we consider only cubes with faces parallel to the coordinate axes.


While working with function spaces, we do not distinguish between spaces of scalar,
vector-valued or tensor-valued functions. However, we denote
vectors by boldface lower-case letters (e.g.,~$\u$) and tensors by
boldface upper-case letters (e.g.,~$\bS$). For vectors
$\u, \vv \in \R^n$, the standard tensor product $\u\otimes \vv\in \R^{n \times n}$
is defined as $(\u\otimes \vv)_{ij}:=u_iv_j$, and the symmetric tensor product as
$\u\otimess \vv:=\frac 12 (\u\otimes \vv + (\u\otimes \vv)^\top)$.
The scalar product of
vectors is denoted by $\u\cdot \vv= \sum_{i=1}^nu_i v_i$ and the
scalar product of tensors is denoted by
$\bA\cdot\bB:=\sum_{i,j=1}^nA_{i j}B_{i j}$.

If $E\subset\R^n$ is a~measurable set, $|E|$ denotes its Lebesgue measure. In the following definitions of function spaces, we always assume $E$ to be a~domain in $\R^n$, with a~sufficiently smooth boundary, if needed. We use standard Lebesgue spaces $(L^p(E),\|\cdot\|_{p})$ and Sobolev
spaces $(W^{k,p}(E),$ $\|\cdot\|_{k,p})$. If the underlying domain $E$ needs to be indicated, we denote 
the respective norms by $\|\cdot\|_{L^p(E)}$ and \mbox{$\|\cdot\|_{W^{k,p}(E)}$}. If $p\in(1,\infty)$, we denote by $p'$ the conjugated exponent $p':=\frac{p}{p-1}$.

Besides the standard $L^p$ spaces we will also consider their weighted variants. A~\emph{weight} is any measurable function $\omega:\R^n\to[0,\infty)$. If $p\in[1,\infty)$ and $\omega$ is a~weight, the space $L^p_\omega(E)$ consists of all measurable functions $f$ on $E$ such that
	$$
		\|f\|_{L^p_\omega(E)} := \left( \int_E |f(\x)|^p \omega(\x) \dx \right)^\frac1p<\infty.
	$$
If $p>1$, an~$A_p$\emph{ weight}
is a~weight such that
$\omega$ and $\omega^{1-p'}$ are locally integrable and
$$
[\omega]_{A_p} := \sup_{B\subset\R^n} \ \frac1{|B|} \int_B \omega
(\x)\dx \left( \frac1{|B|}\int_B 	\omega ^{1-p'}(\x)\dx
\right)^{p-1} < \infty, 
$$
where $B\subset\R^n$ are balls. In this case, we write $\omega\in A_p$. If $p=1$, a~weight is called an~$A_1$ \emph{weight} if $0<\omega<\infty$ a.e. and 
	$$
		[\omega]_{A_1}:= \esssup_{x\in\R^n} \frac{M\omega(\x)}{\omega(\x)} < \infty.
	$$
In here, $M$ is the \emph{maximal operator} (with respect to non-centered balls), defined by
	$$
		M\omega(\x):=\sup_{B\ni \x}\frac1{|B|} \int_B |\omega(\y)|\dy
	$$
for $\x\in\R^n$, where $B\ni \x$ means a~ball containing $\x$. Notice that $[\omega]_{A_p}\ge 1$ for all weights~$\omega$ and all $p\in[1,\infty)$, and the identity holds if and only if $\omega$ is constant (see \cite[Proposition~7.1.5]{grafakos-c}).

The symbol $\spt f$ denotes the
support of a~function $f$. 
The set of all compactly supported,
smooth functions defined on $E$ is denoted by $C^\infty_0 (E)$.
The space $W^{1,p}_0(E)$ is the closure of $C^\infty_0 (E)$ in $W^{1,p}(E)$. If $E$ is bounded, the space $W^{1,p}_0(E)$ may be equipped with the gradient norm
$\|\nabla\cdot\|_p$, thanks to the
Poincar\'e inequality. Next, we denote by $W^{1,p}_{0,\divo}(E)$ the
subspace of $W^{1,p}_{0}(E)$ consisting of solenoidal vector fields
$\u$, i.e.,~such that $\divo \u =0$. 
By $L^p_{\omega,0}(E)$ we denote the subspace of $L^p_\omega(E)$ consisting of
functions $f$ with vanishing mean value, i.e.,~such that $\int_E f(\x)\dx =0$. 

Orlicz and Sobolev--Orlicz spaces also appear frequently in this article. We briefly present their elementary properties here, for details we refer~to \cite{pkjf,ren-rao}. 

An~\emph{$N$-function} is a~continuous, nonnegative, strictly increasing and convex function $\psi$ on $[0,\infty)$ which additionally satisfies $\psi(0)=0$, $\lim_{t\to\infty} \frac{\psi(t)}t=\infty$ and $\lim_{t\to 0+} \frac{\psi(t)}t= 0$. Its \emph{conjugate $N$-function} $\psi^*:[0,\infty)\to [0,\infty)$ is then defined by
	$$
		\psi^*(t):= \sup_{s\ge 0}\ (st-\psi(s)).
	$$
Furthermore, we define
	$$
		\Delta_2(\psi):=\sup \left\{\frac{\psi(2\,t)}{\psi(t)}\, \Big|\ t\in[0,\infty)\right\}.
	$$
If $\Delta_2(\psi)<\infty$, we say that $\psi$ satisfies the \emph{$\Delta_2$-condition}.
From now on, let us assume that \mbox{$\Delta_2(\psi)<\infty$} and $\Delta_2(\psi^*)<\infty$. Then we denote by $L^\psi(E)$ and $W^{1,\psi}(E)$ the classical
Orlicz and Sobolev-Orlicz spaces, respectively. More precisely, $f \in L^\psi(E)$ if the
modular 
	$$
		\varrho_\psi(f):=\int_E \psi(|f(\x)|)\dx
	$$
is
finite, and $f \in W^{1,\psi}(E)$ if both $f$ and $ \nabla f$ belong to
$L^\psi(E)$.  When equipped with the \emph{Luxemburg norm} 
	$$
		\|f\|_{\psi}:= \inf \left\{\lambda >0\, \big|\,\int_E \psi(|f(\x)|/\lambda) \dx \le 1\right\},
	$$ 
the space $L^\psi(E)$
becomes a~Banach space. The same holds for the space
$W^{1,\psi}(E)$ when equipped with the norm $\|\cdot\|_{\psi} +\|\nabla \cdot\|_{\psi}.$
Notice that the dual space
$(L^\psi(E))^*$ can be identified with the space
$L^{\psi^*}(E)$. Furthermore, by $W^{1,\psi}_0(E)$ we denote the closure
of $C^\infty_0(E)$ in $W^{1,\psi}(E)$. If $E$ is bounded and sufficiently regular, the Poincar\'e inequality for Orlicz modulars (see \cite[Lemma 3]{talenti90}) implies that $W^{1,\psi}_0(E)$ may be equipped  with the
gradient norm $\|\nabla\cdot\|_\psi$. By $L^\psi_0(E)$ and
$C^\infty_{0,0}(E)$ we denote the subspaces of $L^\psi(E)$
and $C^\infty_0(E)$, respectively, consisting of functions $f$
such that $\int_E f(\x)\dx =0$. 

If $\psi$ is an~$N$-function satisfying $\Delta_2(\psi)<\infty$ and $\Delta_2( \psi^*)<\infty$, 
then for all $\eps >0$ there exists a~constant $c_\eps>0 $ 
such that 
\begin{equation}\label{eq:young:1}
    ts \le \eps \, \psi(t)+ c_\eps \,\psi^*(s) 
\end{equation}
holds for all $s,t\geq0$, and
\begin{equation}\label{eq:young:2}
	t\, \psi'(s) + \psi'(t)\, s \le \eps \, \psi(t)+ c_\eps \,\psi(s)
\end{equation}
holds for a.e.~$s,t>0$.

\subsection{Difference quotient}

If $k\in\{1,\ldots,n\}$, we denote by $\mathbf{e}^k$ the $k$-th vector
of the canonical basis of the Euclidean space $\R^n$, i.e.,~
$\mathbf{e}^k=(\delta_{ik})_{i=1}^n$. If $E\subset \R^n$, we denote
\begin{align*}
  E \pm \hei &:=\left\{\x \in \R^n\fdg \exists \y \in E: \x=\y
  \pm \hei\right\}, 
  \\
  E_h&:=\left\{\x \in E\fdg \dist (\x,\partial E) >h\right\}.
\end{align*}
Let $\bF:\R^n \to \R^{n\times n}$ be a~measurable tensor field (or a~vector field or a~real-valued function) and $h>0$. Then we define
the \emph{difference quotients of $\bF$} as follows: 
$$
  \dhpm \bF(\x) := \frac{\bF(\x\pm\hei)-\bF(\x)}{h}, 
\qquad \x\in\R^n.
$$
We will also use the notation $\Delta^\pm_{h,k} \bF(\x):= h\, \dhpm \bF(\x)$. 
It is well-known (cf.~\cite[Sec.~5.8]{evans-pde}) that for $ \bF \in W^{1,1}(\R^n)$ one has
\begin{align*} 
  \lim_{h\to 0+}\dhpm \bF (\x) = \partial _{k}\bF(\x) \qquad \text{for\ a.e.}\ \x\in\R^n,
\end{align*}
and
\begin{align}
  \label{eq:1a}
  \nabla \dhpm \bF (\x) = \dhpm \nabla \bF(\x) \qquad \text{for\
  a.e.}\ \x\in\R^n. 
\end{align}
Elementary calculations show that we have the following variant of the 
product rule for $\bF, \bG \in L^1_\loc(\R^n)$
\begin{align*}
  \dhpm(\bF\, \bG)(\x)= \bF(\x \pm h \be^k) \,\dhpm G(\x) + \dhpm
  \bF(\x) \,\bG(\x) \qquad \text{for\  a.e.}\ \x\in\R^n.
\end{align*}
For $\bF, \bG \in L^1(E)$, extended by zero outside $E$, the partial integration formula
\begin{align*}
  \int_E\bF \,\dhp \bG \dx = \int_E \dhm \bF\, \bG \dx
\end{align*}
holds. Moreover, for every $h_0>0$, every open set $E\subset \R^n$, every
$\bF \in W^{1,\psi}_\loc(\R^n)$ and all $h\le h_0$ we
have 
\begin{align}\label{eq:2}
  \int_{E_{h_0}}\psi (\abs{\dhpm \bF(\x)})\dx \le  \int_E\psi
  (\abs{\partial _k \bF(\x)})\dx. 
\end{align}
The proof of this assertion in the case of the special $N$-function
$\psi(t)=t^p$ can be found in \cite[Theorem 3 (i) in Section
5.8]{evans-pde}). In fact, replacing the special $N$-function $t^p$ by
a general $N$-function $\psi$ one can proceed exactly as outlined
there to obtain \eqref{eq:2}. These comments also apply for the
converse statement \cite[Theorem 3 (ii) in Section 5.8]{evans-pde}).
More precisely, if $\dhpm \bF \in L^\psi(E_{h_0})$ for all $h_0>0$ and all
$0<h<h_0$ satisfy
\begin{align}\label{eq:2a}
  \int_{E_{h_0}}\psi (\abs{\dhpm \bF(\x)})\dx \le c_1,
\end{align}
then $\partial _k \bF$ exists in the sense of distributions
and satisfies
\begin{align}\label{eq:2b}
  \int_{E}\psi  (\abs{\partial _k \bF(\x)})\dx\le c_1.
\end{align}
For the convenience of the reader we give a proof of these results in
the appendix. 

\subsection{Operators with kernels}

A~measurable function $K:\R^n\times\R^n \to\R$ is called a~\emph{kernel}. 
\begin{df}
  Let
  $E\subset\R^n$ be a~domain. A~kernel $K:\R^n\times\R^n\to \R$ is called a~\emph{standard
    kernel with respect to $E$} if there exists a~constant $\kappa_1>0$
  such that, for any $\x,\y,\z\in E$ satisfying
  \begin{equation}\label{pulka}
    \x\ne\y \quad \text{and}\quad |\x-\z|\le \frac12|\x-\y|,
  \end{equation}
  the following conditions hold true:
  \begin{align}
    |K(\x,\y)| &\le \frac{\kappa_1}{|\x-\y|^n}, \label{SK1}\\
    |K(\x,\y)-K(\z,\y)| &\le \kappa_1\,\frac{ |\x-\z|}{ |\x-\y|^{n+1}}, \label{SK2}\\
    |K(\y,\x)-K(\y,\z)| &\le \kappa_1\,\frac{|\x-\z|}{ |\x-\y|^{n+1}}. \label{SK3}
  \end{align}
  The set of all kernels satisfying the above conditions with a~given constant $\kappa_1$ will be denoted by $\mathrm{SK}(E,\kappa_1)$.
\end{df}

\begin{rem}
  As it is common in the literature, it is possible to replace
  the right-hand side in \eqref{SK2} and \eqref{SK3} by
  $$
      \kappa_1\,\frac{|\x-\z|^\delta}{ |\x-\y|^{n+\delta}}
  $$
  with $\delta>0$. Although the results could be obtained in this
  generalized setting as well, we restrict ourselves to the case
  $\delta=1$ to avoid further complications. 
\end{rem}

\begin{df}
We say that a~linear operator $T$ on $C_0^\infty (\R^n)$ is 
\emph{generated} by the kernel $K$ if 
\begin{align}
   Tf(\x)& = \int_{\R^n} f(\y)K(\x,\y) \dy \label{ass-op}
\end{align}
holds whenever the right-hand side is well-defined. 
For a~given  kernel $K$ and $\eps>0$ we define the \emph{truncated kernel} $K_\eps$ by 
\begin{align*}
  K_\eps(\x,\y) &:=
  \begin{cases}
    K(\x,\y) &\text{for } \abs{\x-\y}>\eps,
    \\
    0 &\text{for } \abs{\x-\y} \leq \eps,
  \end{cases}
\end{align*}
and denote by $T_\eps$ the operator generated by the kernel $K_\eps$.
\end{df}

\begin{df}
  Let $K$ be a~kernel and let $E\subset\R^n$ be a~domain. We
  call 
  $K$ a~\emph{Calder\'on-Zygmund kernel with respect to $E$}, if there exists a~constant $\kappa_2\in(0,\infty)$ such that the
  function $N\colon \R^n \times \R^n \to \R$, defined by
  $$
     N(\x,\y) := K(\x,\x-\y), \qquad \x,\y\in\R^n,
  $$
  satisfies the following conditions:
  \begin{gather}
   N(\x,\alpha\z)  = \alpha^{-n} N(\x,\z) \text{\quad for all } \x,\z\in\R^n,\ \alpha>0; \label{CZ1} \\
   \int_{|\z|=1} N(\x,\z) \dz = 0 \text{\quad for all } \x\in\R^n;  \label{CZ2} \\
   \sup_{\substack{\x\in E}} \int_{|\z|=1} |N(\x,\z)|^2 \dz  \le \kappa_2^2. \label{CZ3}
 \end{gather}
The set of all kernels satisfying the above conditions with a~given constant $\kappa_2$ will be denoted by $\mathrm{CZ}(E,\kappa_2)$.
\end{df}
\begin{rem}
  One can replace \eqref{CZ3} by
  $$
       \sup_{\substack{\x\in E}} \int_{|\z|=1} |N(\x,\z)|^q \dz  \le \kappa_2^q, 
  $$
  where $q \in (1,\infty)$, and still retain the relevant properties of the operator generated by the kernel. For the sake of simplicity, we use the condition only with $q=2$.  
\end{rem}

\subsection{$(p,\delta)$-structure}
\label{sec:stress_tensor}
Let us define what it means that a~tensor field $\bS$ has
a~$(p,\delta)$-structure. For details on this matter,
see~\cite{die-ett,dr-nafsa}. For a~tensor
$\bfP \in \setR^{n \times n} $ we denote its symmetric part by
	$$
		\bP^\sym:= \frac{\bP +\bP^\top}2 \in \R_\sym ^{n \times n},
	$$
where
	$$
		\R_\sym ^{n \times n} := \set {\bfP \in \setR^{n \times n} \,|\, \bP =\bP^\top}.
	$$
We use the notation $\abs{\bP}^2=\bP \cdot \bP$.

It is convenient to define a~special $N$-function
$\phi=\phi_{p,\delta}$, with $p \in (1,\infty)$, $\delta\ge0$, by
\begin{equation} 
  \label{eq:5}
  \varphi(t):= \int _0^t (\delta +s)^{p-2} s \ds, \qquad t\ge 0.
\end{equation}
The function $\phi$ satisfies, uniformly in $t$ and independently of
$\delta$, the equivalences
\begin{equation*}
  \label{eq:equi1}
  \phi''(t)\, t \sim \phi'(t),
  \qquad \phi'(t)\, t \sim \phi(t),
  \qquad
  t^p+\delta^p \sim \phi(t) +\delta^p. 
\end{equation*}
In case that the (one-sided) derivative $\phi''(0)$ does not exist, we assume that $\phi''(t)\, t$ is continuously extended by zero for $t= 0$. 
We define the \emph{shifted
  $N$-functions} $\set{\phi_a}_{a \ge 0}$
(cf. ~\cite{die-ett,die-kreu,dr-nafsa}) by
\begin{equation*}
  \phi_a(t):= \int _0^t \frac { \phi'(a+s)\,s}{a+s} \ds, \qquad t\ge0. 
\end{equation*}
Note that the family $\set{\phi_a}_{a \ge 0}$ satisfies the $\Delta_2$-condition
uniformly with respect to ${a \ge 0}$, i.e.,
	$$
		\sup_{a\ge0}\ \sup_{t\ge 0} \
                \frac{\phi_a(2t)}{\phi_a(t)} \le c\, 2^{\max\{2,p\}}.
	$$
	
\begin{df} 
\label{ass:1}
  Let $\bS\colon \setR^{n \times n} \to \setR^{n \times n}_\sym $ be a~tensor field  
  satisfying 
	 $$
		 \bS \in C^0(\setR^{n \times n},\setR^{n \times n}_\sym )\cap C^1(\setR^{n \times n}\setminus \{\bfzero\}, \setR^{n \times n}_\sym ), 
	$$
 $\bS(\bP) = \bS\big (\bP^\sym \big )$ whenever $P\in\R^{n\times n}$, and $\bS(\mathbf 0)=\mathbf 0$. 
 We say that $\bS$ has a~\emph{$(p,\delta)$-structure} if for some $p \in (1, \infty)$,
  $\delta\in [0,\infty)$, and the $N$-function
  $\varphi=\varphi_{p,\delta}$ (cf.~\eqref{eq:5}) there exist
  constants $\gamma_0, \gamma_1 >0$ such that the inequalities
   \begin{equation*}
     \begin{aligned}
       \sum\limits_{i,j,k,l=1}^n \partial_{kl} S_{ij} (\bP)
       Q_{ij}Q_{kl} &\ge \gamma_0
       \,\phi''(|\bP^\sym|)|\bQ^\sym|^2\,,
       \\[-2mm]
       \big |\partial_{kl} S_{ij}({\bP})\big | &\le \gamma_1 \,\phi''
       (|\bP^\sym|)
     \end{aligned}
   \end{equation*} 
   are satisfied for all $\bP,\bQ \in \setR^{n\times n} $ with
   $\bP^\sym \neq \bfzero$ and all $i,j,k,l=1,\ldots,n$.  The constants
   $\gamma_0$, $\gamma_1$, and $p$ are called the {\em
     characteristics} of $\bfS$.
\end{df}
\begin{rem}
  %
    An important example of a~tensor field $\bfS$ having
    a~$(p,\delta)$-structure is given by $ \bfS(\bfP) =
    \phi'(\abs{\bfP^\sym})\abs{\bfP^\sym}^{-1} \bfP^\sym$.  In this
    case, the characteristics of $\bfS$, namely $\gamma_0$ and
    $\gamma_1$, depend only on $p$ and are independent of $\delta \geq
    0$.  
\end{rem}
Suppose that a~tensor field $\bS$ has a~$(p,\delta)$-structure. Then we define its \emph{associated tensor field}
 $\bF\colon\setR^{n \times n} \to \setR^{n \times
  n}_\sym$ by
\begin{align}
  \label{eq:def_F}
  \bF(\bP):= \big (\delta+\abs{\bP^\sym} \big )^{\frac
    {p-2}{2}}{\bP^\sym } .
\end{align}
The connection between $\bfS$, $\bfF$ and $\set{\phi_a}_{a \geq 0}$
is best explained by the following proposition
(cf.~\cite{die-ett,dr-nafsa}). 
\begin{prop}
  \label{lem:hammer}
  Let $\bfS$ have a~$(p,\delta)$-structure, and let $\bfF$ be defined
  in~\eqref{eq:def_F}.  Then
    \begin{align}
      \notag 
      \big({\bfS}(\bfP) - {\bfS}(\bfQ)\big) \cdot \big(\bfP-\bfQ \big)
      &\sim \bigabs{ \bfF(\bfP) - \bfF(\bfQ)}^2\,,
      \\
      &\sim \phi_{\abs{\bfP^\sym}}(\abs{\bfP^\sym - \bfQ^\sym})\,,
      \notag 
      \\
      \notag 
      &\sim \phi''\big( \abs{\bfP^\sym} + \abs{\bfP^\sym - \bfQ^\sym}
      \big)\abs{\bfP^\sym - \bfQ^\sym}^2\,,
      \\
      \abs{\bfS(\bfP) - \bfS(\bfQ)} &\sim
      \phi'_{\abs{\bfP^\sym}}\big(\abs{\bfP^\sym -
        \bfQ^\sym}\big)\,,  \label{eq:hammere}  
      \intertext{uniformly in $\bfP, \bfQ \in \setR^{n \times n}$.
        Moreover,  uniformly in $\bfQ \in \setR^{n \times n}$,} 
      \notag 
      \bfS(\bfQ) \cdot \bfQ &\sim \abs{\bfF(\bfQ)}^2 \sim
      \phi(\abs{\bfQ^\sym}).
    \end{align}
  The constants depend only on the characteristics of $\bfS$.
\end{prop}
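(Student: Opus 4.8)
The plan is to establish all the stated equivalences by reducing them to known pointwise inequalities for the $N$-function $\varphi$ and its shifts, together with the algebraic structure of $\bfS$ encoded in Definition~\ref{ass:1}. First I would record the basic facts about shifted $N$-functions that are standard in the literature (see \cite{die-ett,die-kreu,dr-nafsa}): namely that for $a,t\ge 0$ one has $\varphi_a(t)\sim \varphi''(a+t)\,t^2$, that $\varphi_a'(t)\sim \varphi''(a+t)\,t$, and that $\varphi''(a+t)\sim (\delta+a+t)^{p-2}$, all uniformly in $a,t$ and independent of $\delta$. Using these, the chain of $\sim$-relations in \eqref{eq:hammere} splits into two essentially separate tasks: (i) the purely scalar equivalence
\begin{align*}
  \varphi_{\abs{\bfP^\sym}}\bigl(\abs{\bfP^\sym-\bfQ^\sym}\bigr)
  \sim \varphi''\bigl(\abs{\bfP^\sym}+\abs{\bfP^\sym-\bfQ^\sym}\bigr)
       \abs{\bfP^\sym-\bfQ^\sym}^2,
\end{align*}
which is immediate from $\varphi_a(t)\sim\varphi''(a+t)\,t^2$ with $a=\abs{\bfP^\sym}$ and $t=\abs{\bfP^\sym-\bfQ^\sym}$; and (ii) the genuinely tensor-valued equivalences linking $(\bfS(\bfP)-\bfS(\bfQ))\cdot(\bfP-\bfQ)$ and $\abs{\bfF(\bfP)-\bfF(\bfQ)}^2$ and $\abs{\bfS(\bfP)-\bfS(\bfQ)}$ to those scalar quantities.

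For task (ii), the key device is the integration along the segment joining $\bfQ^\sym$ to $\bfP^\sym$. Since $\bfS(\bfP)=\bfS(\bfP^\sym)$ and $\bfF(\bfP)=\bfF(\bfP^\sym)$, we may assume both arguments are symmetric. Writing $\bfP_\theta^\sym:=\bfQ^\sym+\theta(\bfP^\sym-\bfQ^\sym)$ for $\theta\in[0,1]$, the fundamental theorem of calculus gives
\begin{align*}
  \bigl(\bfS(\bfP)-\bfS(\bfQ)\bigr)\cdot\bigl(\bfP-\bfQ\bigr)
  = \int_0^1 \sum_{i,j,k,l} \partial_{kl}S_{ij}(\bfP_\theta^\sym)\,
    (\bfP^\sym-\bfQ^\sym)_{ij}(\bfP^\sym-\bfQ^\sym)_{kl}\,\mathrm d\theta,
\end{align*}
and the two bounds in Definition~\ref{ass:1} sandwich the integrand between $\gamma_0\,\varphi''(\abs{\bfP_\theta^\sym})\abs{\bfP^\sym-\bfQ^\sym}^2$ and $n^4\gamma_1\,\varphi''(\abs{\bfP_\theta^\sym})\abs{\bfP^\sym-\bfQ^\sym}^2$. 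The remaining point is the elementary integral estimate
\begin{align*}
  \int_0^1 \varphi''\bigl(\abs{\bfP_\theta^\sym}\bigr)\,\mathrm d\theta
  \sim \varphi''\bigl(\abs{\bfP^\sym}+\abs{\bfP^\sym-\bfQ^\sym}\bigr),
\end{align*}
which follows from $\varphi''(s)\sim(\delta+s)^{p-2}$, the triangle inequality $\bigl|\abs{\bfP_\theta^\sym}-\abs{\bfP^\sym}\bigr|\le\abs{\bfP^\sym-\bfQ^\sym}$, and the fact that $s\mapsto(\delta+s)^{p-2}$ is monotone (decreasing for $p<2$, increasing for $p>2$) with the doubling-type control needed to compare the value at a generic $\theta$ with the value at the endpoint; this is exactly the standard lemma on shifted $N$-functions. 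For the $\abs{\bfF(\bfP)-\bfF(\bfQ)}^2$ term one argues the same way after computing $\partial_{kl}F_{ij}$, or alternatively one invokes the known identity $\partial_{kl}F_{ij}(\bfP)\cdot\text{(same)}\sim\varphi''(\abs{\bfP^\sym})\abs{\cdot}^2$ and integrates; for $\abs{\bfS(\bfP)-\bfS(\bfQ)}$ one takes the absolute value inside the $\theta$-integral, uses the upper bound on $\partial_{kl}S_{ij}$ to get $\lesssim\varphi''(\cdot)\abs{\bfP^\sym-\bfQ^\sym}\sim\varphi'_{\abs{\bfP^\sym}}(\abs{\bfP^\sym-\bfQ^\sym})$, and for the lower bound tests $\bfS(\bfP)-\bfS(\bfQ)$ against $\bfP^\sym-\bfQ^\sym$ and combines with the already-proven equivalence for $(\bfS(\bfP)-\bfS(\bfQ))\cdot(\bfP-\bfQ)$ via Cauchy--Schwarz. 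Finally the last line, $\bfS(\bfQ)\cdot\bfQ\sim\abs{\bfF(\bfQ)}^2\sim\varphi(\abs{\bfQ^\sym})$, is the special case $\bfP=\bfzero$ together with $\bfS(\bfzero)=\bfzero$, $\bfF(\bfzero)=\bfzero$, and $\varphi_0=\varphi$.

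I expect the main obstacle to be the careful treatment of the integral $\int_0^1\varphi''(\abs{\bfP_\theta^\sym})\,\mathrm d\theta$ uniformly in $\delta\ge0$ and across the two regimes $p\le2$ and $p>2$ — in particular handling the possible singularity of $\varphi''$ at $0$ when $\delta=0$ and $p<2$ (where $\varphi''(s)\sim s^{p-2}$ blows up), which is why the convention that $\varphi''(t)\,t$ is extended by zero at $t=0$ matters and why one must phrase everything in terms of $\varphi''(a+t)$ with the shift present. This is, however, precisely the content of the standard shifted-$N$-function machinery, so once those lemmas are cited the rest is bookkeeping; since the proposition is quoted from \cite{die-ett,dr-nafsa}, it would be legitimate to simply refer there, but I would include the segment-integration argument above for completeness and to track the constants' dependence on $\gamma_0,\gamma_1,p$ only.
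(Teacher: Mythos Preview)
Your proposal is correct and follows exactly the standard segment-integration argument from \cite{die-ett,dr-nafsa}. Note, however, that the paper itself does not prove this proposition at all: it simply cites \cite{die-ett,dr-nafsa} before the statement and remarks afterwards that ``all assertions from this section may be proved by easy manipulations of definitions, and we omit their proofs''; so your sketch is already more detailed than what the paper provides, and the approach you outline is precisely the one used in the cited references.
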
 
For a~detailed discussion of the properties of $\bS$ and $\bF$ and their
relation to Orlicz spaces and $N$-functions we refer the reader
to~\cite{dr-nafsa,bdr-7-5}. 
In what follows, we shall work only with $\bS (\bP)$ and $\bF(\bP)$, where $\bP$ is 
a~symmetric tensor. Therefore, we can drop the superscript
``$^\sym $'' in the above formulas.

If $\bS$ has a~$(p,\delta)$-structure, from
Proposition~\ref{lem:hammer} we easily obtain the following equivalences:
\begin{align*}
  \begin{aligned}
    \notag 
    \abs{\dhpm \T (\Du)} &\sim
    (\delta+|\Du|+|\Delta^\pm_{h,k}{\Du}|)^{p-2}|\dhpm \Du|
    \\
    &\sim \phi''(|\Du|+|\Delta^\pm_{h,k}{\Du}|) |\dhpm \Du|
    \\
    &\sim \big (\phi''(|\Du|+|\Delta^\pm_{h,k}{\Du}|) \big)^\frac
    12\abs{\dhpm \bF (\Du)}\,
    \\
    & \sim (\delta+|\Du|+|\Delta^\pm_{h,k}{\Du}|)^{\frac {p-2}2}|\dhpm
    \bF(\Du)|\,,
  \end{aligned}
\end{align*}    
\vspace{-3mm}
\begin{align}
  \hspace*{-9.7mm}
  \begin{aligned}
    \label{eq:3a}
    \dhpm \T (\Du)\cdot \dhpm \Du &\sim \abs{\dhpm \bF (\Du)}^2
    \\
    &\sim (\delta+|\Du|+|\Delta^\pm_{h,k}\Du|)^{p-2}|\dhpm \Du|^2
    \\
    &\sim \phi''(|\Du|+|\Delta^\pm _{h,k}{\Du}|) |\dhpm \Du|^2.
  \end{aligned}
\end{align}
The equivalence constants depend here only on the characteristics of $\bS$.
All assertions from this section may be proved by easy manipulations
of definitions, and we omit their proofs.


\section {Calder\'on-Zygmund estimates}

Our interest lies in estimates concerning Orlicz modulars. However, we are going to prove the results first in weighted $L^p$ spaces. The following known extrapolation principle (see \cite[Theorem 4.15]{CUMP11}) offers an~elegant connection between the two settings.
\begin{thm}\label{extra}
	Let $p_0\in[1,\infty)$ and let $\mathcal F$ be a~family of pairs of nonnegative measurable functions on $\R^n$. Suppose that there exists a~nondecreasing function $C_0:[1,\infty)\to(0,\infty)$ such that the inequality
		\begin{equation*} 
			\left( \int_{\R^n} f^{p_0}(\x)\omega(\x)\dx \right)^\frac1{p_0} \le C_0\big([\omega]_{A_{p_0}}\big) \left(\int_{\R^n} g^{p_0}(\x)\omega(\x)\dx\right)^\frac1{p_0}
		\end{equation*}
	holds for all $(f,g)\in\mathcal F$ and all weights $\omega\in A_{p_0}$. Then for every $N$-function $\psi$ satisfying $\Delta_2(\psi)<\infty$ and $\Delta_2(\psi^*)<\infty$ there exists a~constant $C\in(0,\infty)$ such that the inequality
		$$
			\int_{\R^n} \psi(f(\x)) \dx \le C \int_{\R^n} \psi(g(\x)) \dx
		$$
	holds for all $(f,g)\in\mathcal F$.
\end{thm}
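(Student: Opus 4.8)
I would follow the Rubio de Francia extrapolation scheme, but with one point kept in mind throughout: one cannot reduce the statement to a norm inequality $\|f\|_\psi\lesssim\|g\|_\psi$ on $\mathcal F$ and then invoke the equivalence of norm and modular. Since the Orlicz modular is not homogeneous, a norm estimate on $\mathcal F$ is strictly weaker than the claimed modular estimate — already for $\psi(t)=t^2+t^4$ the inequality $\|f\|_\psi\le c\,\|g\|_\psi$ does not force $\int\psi(f)\dx\le c'\int\psi(g)\dx$ — so the entire argument has to be run at the level of modulars.

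The ingredients I would assemble first, all consequences of $\Delta_2(\psi)<\infty$ and $\Delta_2(\psi^*)<\infty$, are: (i) the \emph{modular} boundedness of the maximal operator, $\int_{\R^n}\psi^*(Mv)\dx\le c_M\int_{\R^n}\psi^*(v)\dx$ for some $c_M\ge1$ (and the analogue for $\psi$), which holds exactly because the Matuszewska--Orlicz indices of $L^\psi$ lie strictly between $1$ and $\infty$; (ii) the Rubio de Francia operator $\mathcal Rv:=\sum_{k\ge0}(2c_M)^{-k}M^kv$, for which $v\le\mathcal Rv$ pointwise, $[\mathcal Rv]_{A_1}\le 2c_M$, and $\int_{\R^n}\psi^*(\mathcal Rv)\dx\le c\int_{\R^n}\psi^*(v)\dx$ (the last from convexity of $\psi^*$, the $\Delta_2$-condition, and (i)); and (iii) for $p_0>1$, the Jones factorization $A_{p_0}=A_1\cdot A_1^{1-p_0}$ together with $[\omega]_{A_{p_0}}\le[\omega]_{A_1}$ for $\omega\in A_1$, which lets $A_1$-weights produced by $\mathcal R$ enter the hypothesis with a controlled constant.

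For $p_0=1$ the core runs as follows. Fix $(f,g)\in\mathcal F$; after the standard a priori truncation one may assume $\int\psi(f)\dx<\infty$. Convexity and $\psi(0)=0$ give $\int\psi(f)\dx\le\int_{\R^n}f\,\psi'(f)\dx$. Put $v:=\psi'(f)\in L^{\psi^*}(\R^n)$ and $V:=\mathcal Rv$; since $v\le V$ and $V\in A_1$, the hypothesis with weight $V$ gives $\int_{\R^n}f\,\psi'(f)\dx\le\int_{\R^n}fV\dx\le C_0(2c_M)\int_{\R^n}gV\dx$. Split the last integral by Young's inequality \eqref{eq:young:1}, $\int_{\R^n}gV\dx\le\eps\int_{\R^n}\psi(g)\dx+c_\eps\int_{\R^n}\psi^*(V)\dx$, and bound $\int_{\R^n}\psi^*(V)\dx\le c\int_{\R^n}\psi^*(\psi'(f))\dx\le c\,(\Delta_2(\psi)-1)\int_{\R^n}\psi(f)\dx$, using (ii), Young's equality $\psi^*(\psi'(t))=t\psi'(t)-\psi(t)$, and $t\psi'(t)\le\psi(2t)\le\Delta_2(\psi)\psi(t)$. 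Collecting terms yields $\int\psi(f)\dx\le C_0(2c_M)\,\eps\int\psi(g)\dx+C_0(2c_M)\,c_\eps\,c(\psi)\int\psi(f)\dx$; taking $\eps$ \emph{large} — legitimate because $\Delta_2(\psi)<\infty$ (equivalently, the lower index of $\psi^*$ exceeds $1$) forces $c_\eps\to0$ as $\eps\to\infty$ — makes the coefficient of $\int\psi(f)\dx$ smaller than $\tfrac12$, and absorption finishes the case $p_0=1$. For $p_0>1$ I would run the same scheme with $V$ replaced by an $A_{p_0}$-weight $\omega=W_0W_1^{1-p_0}$, where $W_0,W_1$ come from two Rubio de Francia iterations (on the $L^\psi$- and $L^{\psi^*}$-scales) applied to quantities built from $g^{p_0}$ and from $\psi'(f)\,f^{1-p_0}$, arranged so that $\omega$ dominates $\psi'(f)\,f^{1-p_0}$ pointwise; feeding $\omega$ into the hypothesis and absorbing as before closes it. (The tempting shortcut reducing $p_0>1$ to $p_0=1$ via the family $\{(f^{p_0},g^{p_0})\}$ works only when $\psi$ grows at least as fast as $t^{p_0}$, so it is not available in general.)

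The main obstacle is precisely this modular absorption. It necessitates the preliminary reduction to $\int\psi(f)\dx<\infty$ (a truncation / monotone-convergence step on the pairs of $\mathcal F$, harmless in applications), and, more substantially, it rests on the quantitative fact that $c_\eps$ in \eqref{eq:young:1} tends to $0$ as $\eps\to\infty$, which has to be deduced from $\Delta_2(\psi)$. For $p_0>1$ the additional difficulty is bookkeeping: keeping $[\omega]_{A_{p_0}}$ bounded uniformly in $(f,g)$ as the two iterations interact with the $p_0$-th powers.
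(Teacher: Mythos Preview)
The paper does not give its own proof of this statement; it is quoted as a known result from \cite[Theorem~4.15]{CUMP11}, and the subsequent remark only tracks how the constant $C$ depends on $C_0$, $\Delta_2(\psi)$ and $\Delta_2(\psi^*)$ through that reference's argument. Your sketch \emph{is} the Rubio de Francia scheme underlying that reference --- modular boundedness of $M$ on $L^\psi$ and $L^{\psi^*}$, the iteration $\mathcal R$ producing $A_1$ weights with controlled constant, Young's inequality and absorption at the modular level, and for $p_0>1$ the factorization $\omega=W_0W_1^{1-p_0}$ --- so there is nothing substantively different to compare. In fact the constant displayed in the paper's Remark~(ii), with $C_0$ evaluated at $2^{p_0}\rho_\psi(M)^{p_0-1}\rho_{\psi^*}(M)$, is precisely the $A_{p_0}$-bound one gets from combining two $A_1$ weights with constants $2\rho_{\psi^*}(M)$ and $2\rho_\psi(M)$ via $[W_0W_1^{1-p_0}]_{A_{p_0}}\le[W_0]_{A_1}[W_1]_{A_1}^{p_0-1}$, confirming that your outline matches the cited argument.
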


\begin{rem}
	(i) The assumption on monotonicity of $C_0$ with respect to $[\omega]_{A_{p_0}}$ is necessary for the proof of Theorem \ref{extra}.
	 However, this is rarely explicitly mentioned in the literature. 
	 
	(ii) Careful tracking of the constants in the proof of
        \cite[Theorem 4.15]{CUMP11} reveals that the constant $C$ in
        Theorem \ref{extra} depends on $C_0$, $\Delta_2(\psi)$ and
        $\Delta_2(\psi^*)$.  To check this, one may use the inequality
        $[\omega_1 \omega_2^{-1}]_{A_2}\le [\omega_1]_{A_1}
        [\omega_2]_{A_1}$ (see \cite[Proposition 3.5]{CUMP11}) as well
        as the boundedness of the maximal operator in Orlicz spaces
        (see~\cite[Theorem 2.2]{gallardo}). The latter may be
        expressed by the inequality
        \begin{align*}
          \int_{\R^n} \psi(|{M}f(\x)|)\dx \le \Delta_2(\psi^*) \int_{\R^n} \psi(2c^2|f(\x)|)\dx,
        \end{align*}
        where $c$ is the weak $(1,1)$-constant of the maximal
        operator, which depends on $n$ only. 
        In fact, one can show that
        \begin{align*}
        C= \widetilde c\, C_0\big (2^{p_0}
        \rho_\psi(M)^{p_0-1}\rho_{\psi^*}(M)\big )^{\alpha}.
        \end{align*}
        In here, $\widetilde c$ is a~constant depending on
        $\Delta_2(\psi), \Delta_2(\psi^*)$, the exponent $\alpha$
        depends on $\Delta_2(\psi)$, and $\rho_\psi(M)$ is the
        constant in the modular estimate of the maximal operator (it
        can be estimated by $\Delta_2(\psi^*)\,\Delta_2(\psi)^{\beta}$, where $\beta$
        depends on $n$).
\end{rem}

The auxiliary result below is a~simple version of the Whitney covering lemma (cf.~\cite[p.~348]{BS88}).
\begin{prop}\label{Whitney}
	Let $\Omega\in\R^n$ be a~domain. Then there exists a~sequence $\{Q_j\}_{j\in\N}$ of dyadic cubes satisfying:
		\begin{itemize}
			\item[\rm(i)]
				$\Omega=\bigcup_{j\in\N} \overline{Q}_j,$
		 	\item[\rm(ii)]
			 	$Q_j\cap Q_k \neq \varnothing \Leftrightarrow j=k,$
			\item[\rm(iii)]
				$\dist(Q_j,\partial\Omega)>4 \diam Q_j \text{ for all } j\in\N.$
		\end{itemize}
\end{prop}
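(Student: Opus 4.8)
The plan is to construct the Whitney cubes by the classical stopping-time/dyadic decomposition argument, since property (iii) only differs from the textbook version (where the separation constant is comparable to $\diam Q_j$) by the choice of a numerical constant, and can be arranged by taking cubes sufficiently deep in the dyadic hierarchy. Concretely, for each $k\in\Z$ let $\mathcal D_k$ denote the family of dyadic cubes of side length $2^k$, and define, for $\x\in\Omega$, the ``good scale'' as the largest integer $k$ for which the dyadic cube $Q\in\mathcal D_k$ containing $\x$ satisfies $\dist(Q,\partial\Omega) > 4\diam Q$; call the corresponding cube $Q_\x$. The collection $\{Q_\x : \x\in\Omega\}$ is a covering of $\Omega$ by dyadic cubes each satisfying (iii), by construction.

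Next I would extract a pairwise disjoint subfamily with the same union. Here one uses the fundamental nesting property of dyadic cubes: any two dyadic cubes are either disjoint or one contains the other. Hence from $\{Q_\x\}$ one keeps only the maximal cubes — those not properly contained in any other cube of the family — and this yields a countable family $\{Q_j\}_{j\in\N}$ (countability is immediate since dyadic cubes of a given generation are countable and there are countably many generations) that is pairwise disjoint, i.e. satisfies (ii) in the stated form $Q_j\cap Q_k\neq\varnothing\Leftrightarrow j=k$ (using that the cubes are open), and still covers $\Omega$. One must check that the union of the closures is all of $\Omega$: every point of $\Omega$ lies in some $\overline{Q_\x}$, and $Q_\x$ is contained in a maximal cube $Q_j$, so $\x\in\overline{Q_j}$; this gives (i). The only subtle point is to verify that $Q_\x$ is well-defined, i.e. that the supremum defining the good scale is attained and finite: finiteness follows because for $\x$ fixed and $k\to+\infty$ the cube in $\mathcal D_k$ containing $\x$ has diameter tending to $\infty$ while $\dist(Q,\partial\Omega)\le\dist(\x,\partial\Omega)<\infty$ (as $\Omega\neq\R^n$; if $\Omega=\R^n$ the statement is trivial or vacuous), and attainment follows because the relevant inequality is a strict inequality on an integer-indexed monotone family, together with the fact that for $k$ very negative the cube containing $\x$ is tiny and deep inside $\Omega$ so the condition holds — hence the set of admissible $k$ is a nonempty set of integers bounded above.

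The main obstacle, such as it is, is bookkeeping rather than conceptual: one has to be careful that the maximality selection does not destroy the covering property, and that property (iii) is inherited by the selected cubes (it is, since (iii) was imposed on every cube of the original family $\{Q_\x\}$, and $Q_j$ is one of them). I would also remark that no regularity of $\partial\Omega$ is needed — the argument works for an arbitrary open proper subset of $\R^n$ — which matches the hypothesis ``$\Omega\subset\R^n$ is a domain'' in the statement. Since this is the ``simple version'' of Whitney covering (we do not need bounded overlap of dilates, only disjointness plus the separation from the boundary), the proof should be short; the reference to \cite[p.~348]{BS88} indicates the authors expect essentially this argument, possibly just citing it.
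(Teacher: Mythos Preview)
Your proposal is correct and is essentially the classical Whitney construction; the paper's proof is a minor variant that enumerates the maximal ``good'' cubes directly via a countable dense sequence $\{\x^k\}\subset\Omega$ (taking, for each not-yet-covered $\x^k$, the largest dyadic cube $Q$ with $\x^k\in\overline Q$ and $\dist(Q,\partial\Omega)>4\diam Q$) rather than first forming $\{Q_\x:\x\in\Omega\}$ and then selecting maximal elements. The two routes yield the same family of cubes and the verification of (i)--(iii) is the same in both.

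One small technicality worth fixing: in this paper dyadic cubes are \emph{open}, so a point $\x$ with a dyadic coordinate lies in no open dyadic cube of any generation, and your $Q_\x$ is then undefined. The paper sidesteps this by requiring $\x\in\overline Q$ rather than $\x\in Q$ when selecting the cube; you should do the same (or switch to half-open dyadic cubes) so that the covering argument for (i) goes through for every $\x\in\Omega$.
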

\begin{proof}
	Let $\left\{\x^k\right\}_{k\in\N}$ be a~dense sequence of points in $\Omega$. Define $Q_1$ as the largest\footnote{Such cube exists since if a~point is contained in two dyadic cubes, then one of the cubes is a~subset of the other one.} dyadic cube $Q$ such that $\x^1\in \overline{Q}$ and $\dist(Q,\partial\Omega)>4 \diam Q$. Next, suppose that $Q_1,\ldots,Q_{j-1}$ are defined and let $k\in\N$ be the smallest index such that $\x^k\notin\bigcup_{i=1}^{j-1}Q_i$. Then define $Q_j$ as the largest dyadic cube $Q$ such that $\x^k\in \overline{Q}$ and $\dist(Q,\partial\Omega)>4 \diam Q$. The sequence $\left\{Q_j\right\}_{j\in\N}$ obviously satisfies (ii) and (iii). To verify (i), let $\x\in\Omega$ be arbitrary. There exists a~dyadic cube $Q$ such that $\x\in \overline{Q}$ and $\dist(Q,\partial\Omega)>4 \diam Q$. By density, $\x^k\in Q$ for some $k\in\N$. Then necessarily $\overline{Q}\subseteq \overline{Q}_j$ for some $j\in\N$, hence $\x\in\bigcup_{j\in\N}\overline{Q}_j$.
\end{proof}

The next theorem justifies the definition of Calder\'on-Zygmund operators with standard kernels by the Cauchy principal value of the integral \eqref{ass-op} and shows their boundedness in $L^p_\omega(\Omega)$ and $L^\psi(\Omega)$. Although the result is well-known, in standard literature it appears only in the setting $\Omega=\R^n$. We prove it below for any domain $\Omega\subset\R^n$.
\begin{thm}\label{CZ-Orlicz}
  Let $\Omega\subset\R^n$ be a~domain. Let $K\in \mathrm{SK}(\Omega,\kappa_1)\cap \mathrm{CZ}(\Omega,\kappa_2)$ for some $\kappa_1,\kappa_2\in(0,\infty)$. Then the following assertions hold true:
  
  {\rm(i)}  Let $T_\eps $ be the operators generated by
  the truncated kernels $K_\eps$.  Then for any $f\in\Coi(\Omega)$ the
  expressions
 \begin{align}
	     Tf(\x) & := \lim_{\eps \to 0} T_\eps f (\x) , \label{opT}\\
   T^{(*)}f(\x) & := \sup_{\eps>0} |T_\eps f(\x)|     \notag 
 \end{align}
 are defined for a.e.~$\x\in\Omega$. 

 {\rm (ii)} For every $p \in (1,\infty)$ there exists a~nondecreasing function $C_p:[1,\infty)\to (0,\infty)$ such that the operator $T$ defined by \eqref{opT} admits, for each $\omega \in A_p$, a~bounded extension to $L^p_{\omega}(\Omega)$ satisfying 
 \begin{align}\label{modular-p}
   \begin{aligned}
     \left( \int_{\Omega} |Tf(\x)|^p \omega(\x)\dx \right)^\frac 1p &\le C_p\big( [\omega]_{A_p} \big)(\kappa_1+\kappa_2) \left( \int_{\Omega}|f(\x)|^p\omega(\x)\dx \right)^\frac 1p
   \end{aligned}
 \end{align}
 for all $f\in L^p_\omega(\Omega)$. An~analogous assertion holds for the operator $T^{(*)}$.

 {\rm (iii)} For every $N$-function $\psi$  with $\Delta_2(\psi), \Delta_2(\psi^*)<\infty$ there
 exists a~positive constant $C_\psi$ such that 
 the operator $T$ defined by \eqref{opT} admits a~bounded extension to $L^\psi(\Omega)$ satisfying 
 \begin{align}\label{modular-psi}
   \begin{aligned}
     \int_{\Omega} \psi(|Tf(\x)|)\dx &\le C_\psi \int_{\Omega}
     \psi((\kappa_1+\kappa_2)|f(\x)|)\dx 
   \end{aligned}
 \end{align}
 for all $f\in L^\psi(\Omega)$. An~analogous assertion holds for the operator $T^{(*)}$.
\end{thm}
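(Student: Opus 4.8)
\emph{Proof plan.} The idea is to reduce everything to one unweighted $L^2(\Omega)$ bound for the principal-value operator, and then to obtain the weighted $L^p$ estimates by Calderón--Zygmund machinery and the Orlicz estimate by extrapolation. Assertion (i) is elementary on $\Coi(\Omega)$: given $\x\in\Omega$, fix $0<r<\dist(\x,\partial\Omega)$; the homogeneity \eqref{CZ1} and cancellation \eqref{CZ2} give $\int_{\eps<|\x-\y|<r}K(\x,\y)\dy=0$ for all $0<\eps<r$, so
\begin{equation*}
  T_\eps f(\x)=\int_{\eps<|\x-\y|<r}K(\x,\y)\bigl(f(\y)-f(\x)\bigr)\dy+\int_{\{|\x-\y|\ge r\}\cap\Omega}K(\x,\y)\,f(\y)\dy ;
\end{equation*}
by \eqref{SK1} the first integrand is dominated by $\kappa_1\|\nabla f\|_\infty|\x-\y|^{1-n}$, which is integrable near $\x$, so $\lim_{\eps\to 0}T_\eps f(\x)$ exists and $T^{(*)}f(\x)<\infty$ for every $\x\in\Omega$. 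The bounded extensions in (ii), (iii) are then produced by density of $\Coi(\Omega)$ in $L^p_\omega(\Omega)$ and in $L^\psi(\Omega)$.

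The analytic core is the bound $\|Tf\|_{L^2(\Omega)}\le c(n)(\kappa_1+\kappa_2)\|f\|_{L^2(\Omega)}$ for $f\in\Coi(\Omega)$, which I would prove by the classical argument for operators with variable kernels, adapted to a general domain. Writing $N(\x,\z):=K(\x,\x-\z)$, one first extracts from the hypotheses, for every $\x\in\Omega$: $\|N(\x,\cdot)\|_{L^\infty(S^{n-1})}\le\kappa_1$ (from \eqref{SK1} and \eqref{CZ1}, taking $\y=\x-r\theta\in\Omega$ with $r$ small), $N(\x,\cdot)$ Lipschitz on $S^{n-1}$ with constant $c(n)\kappa_1$ (from \eqref{SK3} and \eqref{CZ1}, after the same rescaling of the two points into $\Omega$), and $\|N(\x,\cdot)\|_{L^2(S^{n-1})}\le\kappa_2$ (from \eqref{CZ3}). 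Expanding in spherical harmonics, $N(\x,\theta)=\sum_{k\ge1}\sum_j a_{k,j}(\x)Y_{k,j}(\theta)$ (no $k=0$ term, by \eqref{CZ2}), the Lipschitz bound yields $\sup_{\x\in\Omega}\bigl(\sum_j|a_{k,j}(\x)|^2\bigr)^{1/2}\le c(n)k^{-1}\kappa_1$; extending the $a_{k,j}$ by zero to $\R^n$ one writes $Tf=\sum_{k,j}a_{k,j}\,T^{k,j}f$ on $\Omega$, with $T^{k,j}$ the convolution Calderón--Zygmund operator on $\R^n$ having kernel $Y_{k,j}(\z/|\z|)|\z|^{-n}$. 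By the Bochner--Hecke formula the multiplier of $T^{k,j}$ is $\gamma_k Y_{k,j}(\xi/|\xi|)$ with $|\gamma_k|\le c(n)(1+k)^{-n/2}$, so Plancherel and the addition theorem $\sum_j|Y_{k,j}|^2\equiv d_k/|S^{n-1}|$ ($d_k\le c(n)k^{n-2}$) give $\sum_j\|T^{k,j}f\|_{L^2(\R^n)}^2\le c(n)k^{-2}\|f\|_{L^2(\R^n)}^2$; Cauchy--Schwarz then yields $\bigl\|\sum_j a_{k,j}T^{k,j}f\bigr\|_{L^2(\R^n)}\le c(n)\kappa_1 k^{-2}\|f\|_{L^2(\R^n)}$, which sums in $k$ and gives the $L^2(\Omega)$ bound. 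Cotlar's pointwise inequality $T^{(*)}f\le c\bigl(M(Tf)+Mf\bigr)$, valid by \eqref{SK1}, \eqref{SK3} and the $L^2$ bound, transfers the estimate to $T^{(*)}$.

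From the $L^2(\Omega)$ bound and the regularity \eqref{SK2}--\eqref{SK3} the remaining estimates follow the standard pattern, the only complication being that the kernel estimates hold only at points of $\Omega$; this is handled by performing the Calderón--Zygmund decomposition with cubes drawn from the Whitney covering of $\Omega$ of Proposition \ref{Whitney} (for which $\overline Q\subset\Omega$ and $\dist(Q,\partial\Omega)>4\diam Q$, so that the Hörmander estimate $\sup_{\y\in Q}\int_{\Omega\setminus\alpha Q}|K(\x,\y)-K(\x,y_Q)|\dx\le c(n)\kappa_1$, with $y_Q$ the centre, is available, all three points lying in $\Omega$). This yields the weak-type $(1,1)$ bounds for $T$ and $T^{(*)}$ on $\Omega$, hence, by interpolation and duality, the unweighted $L^p(\Omega)$ bounds for $1<p<\infty$; the weighted estimate \eqref{modular-p} for every $\omega\in A_p$ then follows from the Coifman--Fefferman good-$\lambda$ inequality (an $A_\infty$ statement) together with the boundedness of $M$ on $L^p_\omega(\Omega)$ and the openness of $A_p$, and one checks that the constant is nondecreasing in $[\omega]_{A_p}$ and of the form $C_p([\omega]_{A_p})(\kappa_1+\kappa_2)$. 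Finally, assertion (iii) is obtained by applying the extrapolation Theorem \ref{extra} with $p_0=2$ to the family $\mathcal F=\{(|Tf|,(\kappa_1+\kappa_2)|f|):f\in\Coi(\Omega)\}$ (functions extended by zero to $\R^n$), and likewise for $T^{(*)}$. The two steps I expect to require the most care are the variable-kernel $L^2$ bound over a general $\Omega$ (the Lipschitz-in-$\theta$ control extracted above is exactly what makes the spherical-harmonic series converge, and the zero-extension is what removes the domain) and the domain-adapted Calderón--Zygmund decomposition, where the thick Whitney covering of Proposition \ref{Whitney} keeps every cancellation estimate inside $\Omega$; alongside these, the explicit bookkeeping of the dependence of all constants on $n$, $\kappa_1$, $\kappa_2$, $[\omega]_{A_p}$, $\Delta_2(\psi)$ and $\Delta_2(\psi^*)$ — missing from the literature — is an additional, if routine, task.
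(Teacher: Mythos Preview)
Your plan is sound but follows a genuinely different route from the paper's. The paper's key device is to \emph{extend the kernel to $\R^n$} rather than to redo the Calder\'on--Zygmund theory on $\Omega$: one first assumes the stronger hypothesis $K\in\mathrm{SK}(\widetilde\Omega,\kappa_1)\cap\mathrm{CZ}(\widetilde\Omega,\kappa_2)$ for the thickening $\widetilde\Omega=\{\dist(\cdot,\Omega)\le 4\diam\Omega\}$, picks a cutoff $\eta$ with $\chi_\Omega\le\eta\le\chi_{\{\dist(\cdot,\Omega)\le\diam\Omega\}}$ and $\|\nabla\eta\|_\infty\le 2(\diam\Omega)^{-1}$, and sets $\widetilde K(\x,\y):=K(\x,\y)\eta(\x)\eta(\y)$. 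The conditions \eqref{CZ1}--\eqref{CZ3} extend trivially to $\R^n$, so \cite[Theorem~2]{CZ56} gives $\|\widetilde T\|_{L^2(\R^n)\to L^2(\R^n)}\le c\,\kappa_2$ directly; the technical work is a case analysis (on whether $\x,\y,\z$ lie in $\supp\eta$) showing $\widetilde K\in\mathrm{SK}(\R^n,c_n\kappa_1)$, after which the weighted $L^p$ bound is simply quoted from \cite[Theorem~7.4.6]{grafakos-c}. Since $\widetilde Tf=Tf$ on $\Omega$ for $f\in\Coi(\Omega)$, the estimates descend. The Whitney covering of Proposition~\ref{Whitney} enters only at the very end, to pass from the thickening hypothesis to a general $\Omega$ by summing the already--proved estimate over the cubes $Q_j$ (each satisfying $\widetilde{Q_j}\subset\Omega$). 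The Orlicz step is the same extrapolation you propose. What each approach buys: the paper's cutoff-and-cite strategy is short and offloads the analysis to the full-space literature, at the price of verifying that $\widetilde K$ is a standard kernel on $\R^n$ and of tracing the monotone dependence on $[\omega]_{A_p}$ through the cited proofs; your direct approach is more self-contained (and your $L^2$ bound, relying on the Lipschitz control from \eqref{SK3} rather than on \eqref{CZ3} alone, is arguably more elementary than the rough-kernel result of \cite{CZ56}), but it is longer, and the step you flag---``cubes drawn from the Whitney covering''---needs sharpening: one cannot use the fixed Whitney cubes as the Calder\'on--Zygmund cubes, but must run the stopping-time decomposition \emph{within} each Whitney cube so that every triple entering \eqref{SK2}--\eqref{SK3} stays in $\Omega$.
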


\begin{rem}\label{ind-Omega}
	 In the above theorem, $C_p$ and $C_\psi$ are independent of the domain $\Omega$ in the following sense.
	 Let $\Omega,\Omega'\subset \R^n$ be domains, let $K$, $K'$ be kernels such that $K\in \mathrm{SK}(\Omega,\kappa_1)\cap \mathrm{CZ}(\Omega,\kappa_2)$, $K'\in \mathrm{SK}(\Omega',\kappa_1)\cap \mathrm{CZ}(\Omega',\kappa_2)$ with the same $\kappa_1,\kappa_2\in(0,\infty)$, and let $T$, $T'$ be the corresponding operators. 
	 Then, for each $p\in(1,\infty)$, inequality \eqref{modular-p} holds with the same $C_p$ when $\Omega$ is replaced by $\Omega'$ and $T$ by $T'$. An~analogous assertion holds regarding \eqref{modular-psi}.
\end{rem}

\begin{proof}[of Theorem \ref{CZ-Orlicz}]
  We are going to prove the theorem only for the operator $T$.   The proof for $T^{(*)}$ follows the same reasoning.
  At first, let us assume that 
	\begin{equation}\label{vetsi}
		  K\in \mathrm{SK}(\widetilde{\Omega},\kappa_1)\cap \mathrm{CZ}(\widetilde{\Omega},\kappa_2),
	\end{equation}
	where
    $$
	    \widetilde{\Omega}:=\{\x\in\R^n \,|\, \dist(\x,\Omega) \le 4\diam \Omega\}.
    $$
  Let $\eta\in\Coi(\R^n)$ satisfy $0\le\eta\le 1$,
  $\eta\big|_\Omega\equiv 1$, $\eta(\x)=0$ for all $\x\in\R^n$
  such that $\dist(\x,\Omega)>\diam\Omega$, and $\|\nabla\eta\|_\infty \le 2(\diam\Omega)^{-1}$. For $\x,\y\in\R^n$ define
  $$
   \widehat  K(\x,\y):=K(\x,\y)\eta(\x), \qquad \text{and} \qquad \widetilde{K}(\x,\y):=K(\x,\y)\eta(\x)\eta(\y).
  $$		
  Then the kernel
  $$
   \widehat N(\x,\y):=\widehat K(\x,\x-\y)=K(\x,\x-\y)\eta(\x) =N(\x,\y)\eta(\x)
  $$
  satisfies the conditions \eqref{CZ1} and \eqref{CZ2}. Moreover, it satisfies
  \eqref{CZ3} globally, i.e.,~with $E=\R^n$. Thus, $\widehat  K \in\mathrm{CZ}(\R^n,\kappa_2)$. Therefore, by
  \cite[Theorem 2]{CZ56},
  $$
    \widehat Tg(\x) :=\lim _{\eps \to 0} \int_{\R^n} g(\y) \widehat
    K_{\eps}(\x,\y)\dy=\lim_{\eps \to 0} \widehat T_\varepsilon g(\x)
  $$
  is defined for a.e.~$\x\in\R^n$ and all $g\in\Coi(\R^n)$ and it
  admits a~bounded extension from $L^2(\R^n)$ to $L^2(\R^n)$. It
  follows from the proofs and comments in \cite{CZ56}, in particular
  p.~295 and a~remark on p.~306, that the corresponding operator norm
  satisfies \mbox{$\|\widehat T\|_{L^2\to L^2}\le c\,\kappa_2,$} where
  \mbox{$c>0$} is a~fixed constant depending only on the
  dimension $n$. 
  Since $\widetilde{T}_\eps g(\x)= \widehat T_\eps(g\eta)(\x)$ holds
  for all $\eps>0$, $g\in\Coi(\R^n)$ and a.e.~$\x\in\R^n$, we may draw
  the same conclusions about the operator
 $$
   \widetilde{T}g(\x) := \lim _{\eps \to 0} \int_{\R^n}
   g(\y)\widetilde{K}_\eps (\x,\y) \dy= \lim_{\eps \to 0} \widetilde T_\eps g(\x).
 $$ 
 In particular, we get   $\widetilde{T} g(\x)= \widehat
 T(g\eta)(\x)$, and thus 
	 \begin{equation}\label{el2}
		 \|\widetilde{T}\|_{L^2\to L^2} \le \|\widehat{T}\|_{L^2\to L^2}\le c\,\kappa_2.
	 \end{equation}
 Let us next show that $\widetilde{K}$ is a~standard
 kernel with respect to $\R^n$. Suppose that \mbox{$\x,\y,\z\in\R^n$} satisfy \eqref{pulka}. Then
		\begin{equation}\label{123}
			|\x-\y|\le 2|\z-\y| \le 3|\x-\y|.
		\end{equation}
 We will distinguish two cases. At first, assume that 
		\begin{equation}\label{omezeni}
			\x,\z\in \widetilde\Omega \quad \text{and }\ \dist(\y,\Omega)\le\diam\Omega.
		\end{equation}
 Then obviously $y\in \widetilde\Omega$, and since $K\in \mathrm{SK}(\widetilde\Omega,\kappa_1)$, we have  
 \begin{align*} 
   |\widetilde{K}(\x,\y)| & = |K(\x,\y)\eta(\x)\eta(\y)| \le \frac{\kappa_1}{|\x-\y|^n}, \\
   |\widetilde{K}(\x,\y)-\widetilde{K}(\z,\y)| & = |K(\x,\y)\eta(\x)\eta(\y)-K(\z,\y)\eta(\z)\eta(\y)| \\
                          & \le |K(\x,\y)-K(\z,\y)||\eta(\x)\eta(\y)| + |K(\z,\y)||\eta(\x)-\eta(\z)||\eta(\y)| \\
                          & \le (1+ 3\cdot 2^{n+2}) \kappa_1 \frac{|\x-\z|}{ |\x-\y|^{n+1}}, \\
   |\widetilde{K}(\y,\x)-\widetilde{K}(\y,\z)|	& \le |K(\y,\x)-K(\y,\z)||\eta(\x)\eta(\y)|	+ |K(\y,\z)||\eta(\x)-\eta(\z)||\eta(\y)| \\
                          & \le (1+3\cdot 2^{n+2}) \kappa_1 \frac{|\x-\z|}{ |\x-\y|^{n+1}}.			   
 \end{align*}
 In the second and third condition we used the estimate
 $$
   \frac{|\x-\z|}{|\z-\y|^n} = \frac{|\x-\z||\x-\y|}{|\z-\y|^n|\x-\y|}
   \le \frac{3\cdot 2^{n+1}\diam\Omega\, |\x-\z|}{|\x-\y|^{n+1}},
 $$
	 which holds due to \eqref{123} and \eqref{omezeni}, and the property $\|\nabla\eta\|_\infty \le 2(\diam\Omega)^{-1}$. 
	
 Now suppose that \eqref{pulka} holds but \eqref{omezeni} does not. Then
 one of the following situations occurs:
	
 (i) $\dist(\y,\Omega)>\diam\Omega$, in which case $\y\notin\supp \eta$.
 
 (ii) $\dist(\y,\Omega)\le\diam\Omega$ and $\dist(\x,\Omega)>4\diam\Omega$. Then $\x\notin\supp\eta$ and, moreover, we get
 \begin{align*}
   \dist(\x,\Omega) & \le \dist(\z,\Omega) + |\x-\z| \le \dist(\z,\Omega) + \frac12 |\x-\y| \\
                    & \le \dist(\z,\Omega) + \frac12 \left( \dist(\x,\Omega) + \dist(\y,\Omega) + \diam\Omega\right) \\
                    & \le \dist(\z,\Omega) + \frac12 \dist(\x,\Omega) + \diam\Omega.
 \end{align*}
 Hence, 
 $$
   \dist(\z,\Omega) \ge \frac12 \dist(\x,\Omega) - \diam\Omega > \diam\Omega,
 $$
 which shows that $\z\notin\supp\eta$.
	
 (iii) $\dist(\y,\Omega)\le\diam\Omega$ and $\dist(\z,\Omega)>4\diam\Omega$. Then $\z\notin\supp\eta$ and, moreover, we get
 \begin{align*}
   \dist(\z,\Omega) & \le \dist(\x,\Omega) + |\x-\z| \le \dist(\x,\Omega) + \frac12 |\x-\y| \\
                    & \le \dist(\x,\Omega) + \frac12 \left( \dist(\x,\Omega) + \dist(\y,\Omega) + \diam\Omega\right) \\
                    & \le \frac32 \dist(\x,\Omega) + \diam\Omega.
 \end{align*}
 Hence, 
 $$
    \dist(\x,\Omega) \ge 2 \diam\Omega,
 $$
 which shows that $\x\notin\supp\eta$.	
 
 In each of the three cases we get
 $$
   \widetilde{K}(\x,\y)=\widetilde{K}(\z,\y)=\widetilde{K}(\y,\x)=\widetilde{K}(\y,\z)=0,
 $$
 hence the conditions \eqref{SK1}--\eqref{SK3} are satisfied trivially.
 Altogether, we have now verified that $\widetilde{K} \in \mathrm{SK}(\R^n,\kappa_0)$ with the constant
 $
   \kappa_0=: (1+3\cdot 2^{n+2}) \kappa_1. 
 $
 Thanks to this and the estimate \eqref{el2}, \cite[Theorem 7.4.6]{grafakos-c} yields that
 for every $p\in(1,\infty)$ there exists a~nondecreasing function $C_p:[1,\infty)\to(0,\infty)$ such that
 \begin{equation}\label{eq:pw}
   \left( \int_{\R^n}|\widetilde{T}f(\x)|^p\omega(\x)\dx \right)^\frac 1p \le C_p\big( [\omega]_{A_p} \big) (\kappa_1+\kappa_2)
   \left( \int_{\R^n}|f(\x)|^p\omega(\x)\dx \right)^{\frac 1p}
 \end{equation}
 holds for all $f\in \Coi(\R^n)$ and all $\omega\in A_p$. It has to be noted that the desired monotone dependence of $C_p$ on $[\omega]_{A_p}$ is not mentioned in \cite{grafakos-c} but it can be verified through a~careful inspection of the proofs leading to the result in there.
 Notice also that $C_p$ is independent of $\kappa_1$, $\kappa_2$ and $\Omega$.
 
 In the next step, let $\psi$ be an~$N$-function with
 $\Delta_2(\psi)<\infty$ and $\Delta_2(\psi^*)<\infty$. Due to
 \eqref{eq:pw} Theorem \ref{extra}
 provides the existence of a~positive constant $C_\psi$ such that 
 \begin{equation}\label{modRn}
   \int_{\R^n} \psi(|\widetilde{T}f(\x)|)\dx \le C_\psi \int_{\R^n} \psi((\kappa_1+\kappa_2)|f(\x)|)\dx
 \end{equation}
 for all $f\in\Coi(\R^n)$. To obtain this estimate we have performed the extrapolation with respect to the family
	 $$
		 \mathcal F := \left\{ \big(\widetilde{T}f,(\kappa_1+\kappa_2)f\big) \,\big|\, f\in\Coi(\R^n) \right\}.
	 $$
 The obtained constant $C_\psi$ is independent of $\kappa_1$, $\kappa_2$ and $\Omega$. For any $f\in\Coi(\Omega)$ and $\x\in\Omega$ we have
 $$
   \widetilde{T}f(\x) = \eta(\x)Tf(\x) = Tf(\x).
 $$
 Hence, \eqref{modRn} and \eqref{eq:pw} yield \eqref{modular-p} and \eqref{modular-psi}, respectively, for $f\in\Coi(\Omega)$. Since $\Coi(\Omega)$ is dense in $L^p_\omega(\Omega)$ as well as in $L^\psi(\Omega)$, the operator $T$  admits corresponding bounded extensions such that \eqref{modular-p} and \eqref{modular-psi} hold for any $f\in L^p_\omega(\Omega)$ and $f\in L^\psi(\Omega)$, respectively. 
  
 So far, we have proved the theorem under the stronger assumption \eqref{vetsi}. To prove it in full generality, let $\Omega\subset\R^n$ be an~arbitrary domain and $K\in \mathrm{SK}(\Omega,\kappa_1)\cap \mathrm{CZ}(\Omega,\kappa_2)$. By Proposition \ref{Whitney} there exists a~sequence $\{Q_j\}_{j\in\N}$ of pairwise disjoint dyadic cubes such that $\Omega=\bigcup_{j\in\N} \overline{Q}_j$ and 
	 $$
		 \widetilde{Q}_j := \{ \x\in\R^n \fdg \dist(\x,Q_j)\le 4\diam Q_j \} \subset \Omega
	 $$
 for all $j\in\N$. Hence, for any $j\in\N$, we have $K\in \mathrm{SK}(\widetilde{Q}_j,\kappa_1)\cap \mathrm{CZ}(\widetilde{Q}_j,\kappa_2)$, and using the first part of the proof we get
	 \begin{align*}
		 \int_{\Omega} |Tf(\x)|^p \omega(\x)\dx & = \sum_{j\in\N} \int_{Q_j} |Tf(\x)|^p \omega(\x)\dx \\
												& \le C_p^p\big( [\omega]_{A_p} \big) (\kappa_1+\kappa_2)^p \sum_{j\in\N} \int_{Q_j} |f(\x)|^p\omega(\x)\dx\\
												& = C_p^p\big( [\omega]_{A_p} \big) (\kappa_1+\kappa_2)^p \int_{\Omega} |f(\x)|^p\omega(\x)\dx
	 \end{align*}
 for all $\omega\in A_p$ and $f\in L^p_\omega(\Omega)$. In here, it is important that $C_p^p\big( [\omega]_{A_p} \big)$ does not depend on~$Q_j$. Similarly, we obtain \eqref{modular-psi} for any $f\in L^\psi(\Omega)$. Note also that Remark \ref{ind-Omega} is justified.
\end{proof}

\section{Divergence equation}

We proceed with proving the new estimate concerning the Bogovski solution of the divergence equation. We prove the result in the weighted-$L^p$ setting. The variant for Orlicz modulars will be then obtained as a~corollary by extrapolation.

\begin{thm}\label{difLp}
	Let $Q\subset \R^n$ be an~open cube. There exists a~linear operator $\B:\Cooi(Q)\to W^{1,\infty}_0(Q)$ with the following properties:
	\begin{itemize}
		\item[\rm(i)] 
                 For every $f\in\Cooi(Q)$, the equation
		\begin{equation}\label{diveq}
		\div (\Bf) = f 
		\end{equation}
		is satisfied in $Q$. 
		\item[\rm(ii)]
		For every $p\in(1,\infty)$ there exists a~nondecreasing function $C_{p}:[1,\infty)\to(0,\infty)$ such that
		\begin{equation}\label{bogest-p}
		\left( \int_Q |\nabla(\Bf)(\x)|^p \omega(\x) \dx \right)^\frac 1p \le C_{p} \big( [\omega]_{A_p} \big) \left( \int_Q |f(\x)|^p \omega(\x)\dx \right)^\frac 1p.
		\end{equation}
		holds for all $\omega\in A_p$ and all $f\in\Cooi(Q)$. The function $C_p$ is independent of $Q$.
		\item[\rm(iii)] 
		For every $p\in(1,\infty)$ there exists a~nondecreasing function $\widetilde{C}_{p}:[1,\infty)\to(0,\infty)$ such that
		\begin{align}\label{R2-p}
                  \begin{aligned}
                    &\left( \int_Q \big|\dhpm \nabla(\Bf)(\x)\big|^p \omega(\x) \dx \right)^\frac 1p \\
                    &\le \widetilde{C}_{p} \big( [\omega]_{A_p} \big) \left(
                      \int_{Q} \left( \big|\dhp f(\x)\big| + \big|\dhm
                        f(\x)\big| + \frac{|f(\x)|}{\ell(Q)} \right)^p
                      \omega(\x) \dx \right)^\frac 1p
                  \end{aligned}
		\end{align}
		holds for all $h>0$, $\omega\in A_p$ and $f\in\Cooi(Q)$. The function $\widetilde{C}_p$ is independent of $Q$.
		\item[\rm(iv)]
		For every $p\in(1,\infty)$ and $\omega\in A_p$ there exists a~continuous extension of $\B$ to $L^p_{\omega,0}(Q)$ such that \eqref{diveq}, \eqref{bogest-p} and \eqref{R2-p} hold for all $f\in L^p_{\omega,0}(Q)$. In here, $f$ and $\Bf$ are extended by zero outside $Q$ so that their difference quotients are defined a.e.~in $\R^n$.
	\end{itemize}
\end{thm}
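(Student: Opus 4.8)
The plan is to take for $\mathcal B$ the classical Bogovski operator attached to a fixed cut-off $\theta$, and to deduce all three estimates from Theorem~\ref{CZ-Orlicz} after recognizing $\nabla\mathcal B$ — and the relevant difference-quotient commutators of $\nabla\mathcal B$ — as Calder\'on--Zygmund operators whose kernels lie in $\mathrm{SK}(\R^n,\kappa_1)\cap\mathrm{CZ}(\R^n,\kappa_2)$ with $\kappa_1,\kappa_2$ depending only on $n$. First I would reduce, by an affine change of variables, to $\ell(Q)=1$; since the right-hand sides of \eqref{bogest-p} and \eqref{R2-p} are dilation invariant in the correct way, constants produced in this normalization will automatically be independent of $Q$. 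Fixing $\theta\in\Coi(Q)$ with $\int_Q\theta=1$, $\operatorname{supp}\theta\Subset Q$ and $\|\nabla^j\theta\|_\infty\le c(n)$ for $j=0,1,2$, I set $\mathbf G(\x,\y):=(\x-\y)\int_0^\infty\theta(\x+t(\x-\y))(1+t)^{n-1}\,\mathrm dt$ and $\Bf(\x):=\int_Q f(\y)\mathbf G(\x,\y)\dy$. Because $\operatorname{supp}\theta$ and $\operatorname{supp}f$ are compactly contained in the convex set $Q$, one has $\operatorname{supp}\Bf\subset\operatorname{conv}(\operatorname{supp}\theta\cup\operatorname{supp}f)\Subset Q$, so $\mathcal B\colon\Cooi(Q)\to\Coi(Q)\subset W^{1,\infty}_0(Q)$, and the classical identity $\div\Bf=f$ gives~(i).

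For (ii) I would use the classical representation obtained by differentiating $\mathbf G$: for $f\in\Cooi(Q)$,
\begin{equation*}
  \partial_l(\Bf)_i(\x)=\mathrm{p.v.}\!\int_Q K_{il}(\x,\x-\y)f(\y)\dy+g_{il}(\x)\,f(\x),
\end{equation*}
where, for each $i,l$, the kernel $(\x,\y)\mapsto K_{il}(\x,\x-\y)$ belongs to $\mathrm{SK}(\R^n,\kappa_1)\cap\mathrm{CZ}(\R^n,\kappa_2)$ with $\kappa_1,\kappa_2\le c(n)$ (the homogeneity, mean-zero and standard-kernel bounds being the usual ones for the Bogovski kernel), and $g_{il}\in C^\infty(\R^n)$ with $\|g_{il}\|_\infty+\|\nabla g_{il}\|_\infty\le c(n)$. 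Theorem~\ref{CZ-Orlicz}(ii) applied on the domain $Q$, together with $\|g_{il}f\|_{L^p_\omega(Q)}\le\|g_{il}\|_\infty\|f\|_{L^p_\omega(Q)}$, yields \eqref{bogest-p} with a nondecreasing $C_p$ depending only on $n$ and $[\omega]_{A_p}$; by Remark~\ref{ind-Omega} and the normalization $\ell(Q)=1$ this $C_p$ is independent of $Q$.

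The core of the theorem is (iii). Fix $h>0$. If $h\ge\ell(Q)$, then since $\operatorname{supp}\nabla\Bf\subset Q$ we have $(\nabla\Bf)(\x\pm h\mathbf e^k)=0$ for $\x\in Q$, hence $\dhpm\nabla\Bf=-\tfrac1h\nabla\Bf$ on $Q$, and the estimate follows from \eqref{bogest-p} since $\tfrac1{\ell(Q)}|f|$ is one of the terms in the right-hand side of \eqref{R2-p}. For $0<h<\ell(Q)$ I would write $\dhpm\nabla\Bf=\nabla\dhpm\Bf$ by \eqref{eq:1a} and commute the difference quotient through the kernel: substituting $\y\mapsto\y\pm h\mathbf e^k$ in $(T_{il}f)(\x\pm h\mathbf e^k)$ gives
\begin{equation*}
  \dhpm(T_{il}f)(\x)=\mathrm{p.v.}\!\int K_{il}(\x\pm h\mathbf e^k,\x-\y)\,\dhpm f(\y)\dy
  +\mathrm{p.v.}\!\int\big(D^\pm_h K_{il}\big)(\x,\x-\y)\,f(\y)\dy,
\end{equation*}
with $D^\pm_h K_{il}(\x,\z):=\tfrac1h\big(K_{il}(\x\pm h\mathbf e^k,\z)-K_{il}(\x,\z)\big)$, plus the elementary product-rule identity for $\dhpm(g_{il}f)$. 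The key observation is that both kernels above again lie in $\mathrm{SK}(\R^n)\cap\mathrm{CZ}(\R^n)$ \emph{uniformly in $h$}: translating the $\x$-slot is an $L^p_\omega$-isometry up to replacing $\omega$ by $\omega(\cdot\mp h\mathbf e^k)$, which has the same $A_p$-constant, so $C_p$ is unchanged; and the extra divided difference in the first slot costs exactly one derivative of $\theta$, i.e.\ a factor $\lesssim\ell(Q)^{-1}$, while preserving homogeneity and mean-zero in the second slot. Invoking Theorem~\ref{CZ-Orlicz}(ii) once more bounds the first term by $C_p([\omega]_{A_p})\|\dhpm f\|_{L^p_\omega}$ and the second by $C_p([\omega]_{A_p})\ell(Q)^{-1}\|f\|_{L^p_\omega(Q)}$, while the $g_{il}$-terms are controlled pointwise by $|\dhpm f|+\ell(Q)^{-1}|f|$. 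I expect the genuinely delicate point to be the boundary layer: extended by zero, $\dhpm f$ need not be supported in $Q$ but spills into a slab of width $h$ across one face, and $\dhpm\nabla\Bf$ restricted to the adjacent interior slab of $Q$ equals $-\tfrac1h\nabla\Bf$ there, so one cannot simply pass from $\|\dhpm f\|_{L^p_\omega(\R^n)}$ to $\|\dhpm f\|_{L^p_\omega(Q)}$ because an $A_p$-weight does not transfer pointwise across $\partial Q$. Resolving this is where the boundary condition $\Bf|_{\partial Q}=0$ (equivalently, $\div\Bf=f$ together with the vanishing tangential components near $\partial Q$) must be used to see that, on the boundary slabs, the offending difference quotient of $\nabla\Bf$ is in fact dominated by $f$ itself — which is precisely why \emph{both} $|\dhp f|$ and $|\dhm f|$, each equal to $|f|/h$ on the respective boundary slab of $Q$, appear on the right-hand side of \eqref{R2-p}.

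Finally, for (iv): since $\Cooi(Q)$ is dense in $L^p_{\omega,0}(Q)$ and the bounds \eqref{bogest-p}, \eqref{R2-p} obtained above are uniform, $\mathcal B$ extends continuously to $L^p_{\omega,0}(Q)$, \eqref{diveq} persists in the sense of distributions, and \eqref{bogest-p}, \eqref{R2-p} pass to the limit (for fixed $h$ the difference-quotient operation is bounded on the relevant spaces). The Orlicz-modular versions of (ii)--(iii) then follow, as a corollary, from the extrapolation Theorem~\ref{extra} applied to the families $\{(\nabla\Bf,f)\}$ and $\{(\dhpm\nabla\Bf,\ |\dhp f|+|\dhm f|+\ell(Q)^{-1}|f|)\}$.
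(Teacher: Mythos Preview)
Your overall strategy is the paper's: take the Bogovski operator, use the Galdi-type representation of $\nabla\Bf$ as a singular integral, and for part~(iii) split $\dhpm\nabla\Bf$ into a term with translated kernel acting on $\dhpm f$ plus a term with divided-difference kernel acting on $f$. However, there are two genuine errors in the execution.

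First, the Bogovski kernels are \emph{not} in $\mathrm{SK}(\R^n,\kappa_1)\cap\mathrm{CZ}(\R^n,\kappa_2)$ with dimensional constants. For $N(\x,\z)=K_{il}(\x,\z)$ one has $N(\x,\z)=\abs{\z}^{-n}\int_0^\infty\theta(\x+\xi\z/\abs{\z})\xi^{n-1}\,d\xi+\cdots$; with $\abs{\z}=1$ and $\x$ at distance $R$ from $\supp\theta$, the integrand is supported where $\xi\sim R$, so $\abs{N(\x,\z)}\sim R^{n-1}$ and both \eqref{CZ3} and \eqref{SK1} blow up as $R\to\infty$. Your translation argument for the first kernel (``replace $\omega$ by $\omega(\cdot\mp h\mathbf e^k)$, same $A_p$-constant'') is formally correct, but it presupposes a global $L^p_\omega$-bound for $T_{il}$ that simply does not exist. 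The paper avoids this by never leaving a fixed bounded domain: it checks directly that the shifted kernel $K^h_{ij}$ built from $\varrho(\cdot+h\mathbf e^k)$ lies in $\mathrm{SK}(E,\kappa)\cap\mathrm{CZ}(E,\kappa)$ on $E=Q\cup(Q-\tfrac14\mathbf e^k)$ with $\kappa$ independent of $h\in(0,\tfrac14)$, and then invokes Theorem~\ref{CZ-Orlicz} on $E$.

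Second, your representation $\partial_l(\Bf)_i=\mathrm{p.v.}\!\int K_{il}(\x,\x-\y)f(\y)\,d\y+g_{il}(\x)f(\x)$ is incomplete. Differentiating the Bogovski formula produces, besides the principal-value term and the multiplier term, a \emph{weakly singular} remainder $\int_Q f(\y)U_{ij}(\x,\y)\,d\y$ with $\abs{U_{ij}(\x,\y)}\lesssim\abs{\x-\y}^{1-n}$; the paper bounds it pointwise by $Mf$ and then uses the weighted maximal inequality. The same extra term reappears in the decomposition of $\partial_j a^{h,1}_i$ and $\partial_j a^{h,2}_i$ and must be treated each time. Related to this, your one-line claim that $D^\pm_hK_{il}$ is SK with $h$-independent constants hides the only nontrivial computation in part~(iii): establishing \eqref{SK2}--\eqref{SK3} for this kernel forces you to bound a second difference of $\theta$, which is exactly why the paper isolates Lemma~\ref{2dif} and carries out the estimate explicitly (costing $\|\varrho\|_{2,\infty}$ for $m^{h,1}_{ij}$ and $\|\varrho\|_{3,\infty}$ for $m^{h,2}_{ij}$). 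Your remark about the boundary slab is perceptive --- it is indeed why both $\dhp f$ and $\dhm f$ appear on the right --- but the paper resolves it by taking the domain in Theorem~\ref{CZ-Orlicz} to be $Q\cup(Q-h\mathbf e^k)$ and then passing back to $Q$ via the identity $\dhp f(\x)=-\dhm f(\x+h\mathbf e^k)$, not via any vanishing of $\nabla\Bf$ near $\partial Q$.
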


\begin{cor}\label{difOrlicz}
  Let $Q\subset \R^n$ be an~open cube. Let
  $\psi$ be an~$N$-function with $\Delta_2(\psi)<\infty$ and $\Delta_2(\psi^*)<\infty$. 
  Then there exists a~linear operator $\B:L_0^\psi(Q) \to W_0^{1,\psi}(Q)$ with the following properties:
  \begin{itemize}
  \item[\rm(i)] Equation \eqref{diveq} is satisfied a.e.~in $Q$ for
    all $f \in L^\psi_0(Q)$.
  \item[\rm(ii)]
    There exists a~positive constant $C_\psi$ such that 
    \begin{equation*} 
      \int_Q \psi(|\nabla(\Bf)(\x)|)\dx \le C_\psi \int_Q \psi(|f(\x)|)\dx
    \end{equation*}
	 holds for all $f\in L^\psi_0(Q)$. The constant $C_\psi$ is independent of $Q$.
\item[\rm(iii)] There exists a~positive constant $\widetilde{C}_\psi$ such that 
      \begin{equation*} 
      \int_Q \psi \left(\big|\dhpm \nabla(\Bf)(\x)\big| \right)\dx 
      \le \widetilde{C}_\psi  \int_{Q} \psi \left( \big|\dhp f(\x)\big| + \big|\dhm f(\x)\big| + \frac{|f(\x)|}{\ell(Q)}
      \right) \dx 
    \end{equation*}
  holds for all $h>0$ and all $f\in L^\psi_0(Q)$. In here, $f$ and $\Bf$ are extended by zero outside $Q$ so that their difference quotients are defined a.e.~in $\R^n$. The constant $\widetilde{C}_\psi$ is independent of $Q$.
  \end{itemize}	
\end{cor}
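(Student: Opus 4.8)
The plan is to deduce Corollary~\ref{difOrlicz} from Theorem~\ref{difLp} by means of the extrapolation principle of Theorem~\ref{extra}, followed by a density argument. Throughout, let $\B$ denote the operator supplied by Theorem~\ref{difLp}, initially acting on $\Cooi(Q)$, and extend every $f\in\Cooi(Q)$ and every $\B f$ by zero outside $Q$, so that all functions occurring below are defined on all of $\R^n$.

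First I would fix an auxiliary exponent, say $p_0=2$, and recast the weighted estimates of Theorem~\ref{difLp} in the form demanded by Theorem~\ref{extra}. For item~(ii), apply Theorem~\ref{extra} with this $p_0$ to the family
\[
  \mathcal{F}_1:=\bigl\{\,\bigl(\,|\nabla(\B f)|\,\chi_Q,\ |f|\,\chi_Q\,\bigr)\ \big|\ f\in\Cooi(Q)\,\bigr\}
\]
of pairs of nonnegative measurable functions on $\R^n$. Since $f$ and $\B f$ vanish outside $Q$, inequality~\eqref{bogest-p} with $p=p_0$ is precisely the hypothesis of Theorem~\ref{extra} with $C_0=C_{p_0}$ (the monotone dependence of $C_{p_0}$ on $[\omega]_{A_{p_0}}$ is part of the statement of Theorem~\ref{difLp}, and the admissible weights are all of $A_{p_0}$). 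The theorem then yields a constant $C_\psi$ depending only on $C_{p_0}$, $\Delta_2(\psi)$ and $\Delta_2(\psi^*)$, such that
\[
  \int_Q\psi\bigl(|\nabla(\B f)(\x)|\bigr)\dx\le C_\psi\int_Q\psi\bigl(|f(\x)|\bigr)\dx
\]
for all $f\in\Cooi(Q)$; the $Q$-independence of $C_{p_0}$ makes $C_\psi$ independent of $Q$. Item~(iii) is handled identically, applying Theorem~\ref{extra} to
\[
  \mathcal{F}_2:=\Bigl\{\,\Bigl(\,|\dhpm\nabla(\B f)|\,\chi_Q,\ \bigl(|\dhp f|+|\dhm f|+\tfrac{|f|}{\ell(Q)}\bigr)\chi_Q\,\Bigr)\ \Big|\ f\in\Cooi(Q),\ h>0\,\Bigr\}
\]
together with inequality~\eqref{R2-p} and $C_0=\widetilde C_{p_0}$; this produces a $Q$-independent constant $\widetilde C_\psi$ and the asserted modular inequality for all $f\in\Cooi(Q)$ and all $h>0$.

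It then remains to extend $\B$ from $\Cooi(Q)$ to $L^\psi_0(Q)$. Since $\Delta_2(\psi)<\infty$, mollification shows $\Coi(Q)$ is dense in $L^\psi(Q)$; fixing $g\in\Coi(Q)$ with $\int_Q g\dx=1$ and replacing an approximating sequence $f_j$ by $f_j-\bigl(\int_Q f_j\dx\bigr)g$ shows that $\Cooi(Q)$ is dense in $L^\psi_0(Q)$, using that $\int_Q f_j\dx\to\int_Q f\dx=0$ and that under $\Delta_2(\psi)$ modular convergence coincides with norm convergence. Applying the modular estimate of~(ii) to $\lambda(f_j-f_k)$ for arbitrary $\lambda>0$ — legitimate by the linearity of $\B$ — together with the Poincar\'e inequality on $W^{1,\psi}_0(Q)$, one sees that $\{\B f_j\}$ is Cauchy in $W^{1,\psi}_0(Q)$ whenever $f_j\to f$ in $L^\psi_0(Q)$; one defines $\B f$ as its limit, independence of the chosen sequence following from the same estimate. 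Properties~(i)--(iii) now pass to the limit: $\div$ maps $W^{1,\psi}(Q)$ continuously into $L^\psi(Q)$, so $f_j=\div(\B f_j)\to\div(\B f)$ gives~\eqref{diveq}; for~(iii), for each fixed $h>0$ the operators $\dhpm$, $\dhp$, $\dhm$ and multiplication by $\ell(Q)^{-1}$ are bounded on $L^\psi$, whence both sides of the inequality are continuous along the sequence $\{f_j\}$.

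I do not expect a genuine obstacle here — the argument is a routine marriage of extrapolation and density. The two points needing care are: first, that the weighted inequalities of Theorem~\ref{difLp} are indeed valid for every $\omega\in A_{p_0}$ on $\R^n$ once $f$ and $\B f$ are extended by zero (this is what makes $\mathcal F_1$ and $\mathcal F_2$ admissible families on $\R^n$); and second, that the extrapolation constant inherits $Q$-independence, which follows from the $Q$-independence of $C_{p_0}$, $\widetilde C_{p_0}$ and the fact — noted in the remark after Theorem~\ref{extra} — that the resulting constant depends only on $C_0$, $\Delta_2(\psi)$ and $\Delta_2(\psi^*)$. A further minor point is the density of $\Cooi(Q)$ in $L^\psi_0(Q)$, which again rests on $\Delta_2(\psi)<\infty$.
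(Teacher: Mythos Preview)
Your proposal is correct and follows exactly the route the paper takes: the paper's entire proof is the one-line remark that the corollary follows from Theorems~\ref{difLp} and~\ref{extra}, and you have supplied the details of precisely that extrapolation-plus-density argument. Your treatment of the two ancillary points you flag (extension by zero so the families live on $\R^n$, and $Q$-independence of the extrapolated constant via the remark after Theorem~\ref{extra}) is accurate.
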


Before we prove Theorem \ref{difLp}, let us show the following simple technical proposition.
\begin{lem}\label{2dif}
	Let $\varrho\in C^\infty(\R^n,\R)$ and $\xa,\xb,\w\in\R^n$. Then
	\begin{equation}\label{trivka}
	|\varrho(\xa)-\varrho(\xa+\w)-\varrho(\xb)+\varrho(\xb+\w)| \le \|\nabla^2\varrho\|_\infty |\w| |\xa-\xb|.
	\end{equation}
\end{lem}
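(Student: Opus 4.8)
The plan is to prove the estimate \eqref{trivka} by applying the fundamental theorem of calculus twice, first in the direction of $\w$ and then in the direction of $\xb - \xa$. First I would introduce the auxiliary function $g(\mathbf{t}) := \varrho(\mathbf{t} + \w) - \varrho(\mathbf{t})$, so that the left-hand side of \eqref{trivka} is exactly $|g(\xa) - g(\xb)|$. Writing $g(\xa) - g(\xb) = \int_0^1 \nabla g(\xb + \theta(\xa - \xb)) \cdot (\xa - \xb)\, \mathrm{d}\theta$ and noting that $\nabla g(\mathbf{t}) = \nabla\varrho(\mathbf{t} + \w) - \nabla\varrho(\mathbf{t})$, I would then apply the mean value estimate once more to each component: $|\nabla\varrho(\mathbf{t} + \w) - \nabla\varrho(\mathbf{t})| \le \|\nabla^2\varrho\|_\infty |\w|$.

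Combining these two bounds gives
\begin{equation*}
	|\varrho(\xa) - \varrho(\xa + \w) - \varrho(\xb) + \varrho(\xb + \w)|
	\le \int_0^1 |\nabla g(\xb + \theta(\xa - \xb))|\, |\xa - \xb|\, \mathrm{d}\theta
	\le \|\nabla^2\varrho\|_\infty |\w|\, |\xa - \xb|,
\end{equation*}
which is precisely \eqref{trivka}. Alternatively one could write the difference as the double integral $\int_0^1 \int_0^1 \nabla^2\varrho(\xb + \theta(\xa - \xb) + \sigma\w)[\xa - \xb, \w]\, \mathrm{d}\sigma\, \mathrm{d}\theta$ and estimate the integrand directly by $\|\nabla^2\varrho\|_\infty |\xa - \xb| |\w|$; both routes are elementary.

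There is essentially no obstacle here: the only mild point to watch is that $\|\nabla^2\varrho\|_\infty$ is assumed finite (which is implicit in the way the estimate is stated, since otherwise the bound is vacuous), and that all the segments involved lie in $\R^n$ where $\varrho \in C^\infty$, so the fundamental theorem of calculus applies without any regularity issues. The statement and its proof are purely a convenience lemma to be invoked later when controlling second-order differences of the cutoff functions appearing in the Bogovski construction.
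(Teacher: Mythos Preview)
Your proof is correct and follows essentially the same approach as the paper: apply the mean value inequality once in the $\xa-\xb$ direction to the function $g(\mathbf{t})=\varrho(\mathbf{t}+\w)-\varrho(\mathbf{t})$, then once more in the $\w$ direction to bound $|\nabla g|\le\|\nabla^2\varrho\|_\infty|\w|$. The paper compresses this into the single chain $|\varrho(\xa)-\varrho(\xa+\w)-\varrho(\xb)+\varrho(\xb+\w)|\le\|\nabla\varrho-\nabla\varrho(\cdot+\w)\|_\infty|\xa-\xb|\le\|\nabla^2\varrho\|_\infty|\w||\xa-\xb|$, which is precisely your argument written more tersely.
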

\begin{proof}
	We have
		$$
			|\varrho(\xa)-\varrho(\xa+\w)-\varrho(\xb)+\varrho(\xb+\w)|  \le \|\nabla\varrho-\nabla\varrho(\,\cdot + \w)\|_\infty|\xa-\xb| \le \|\nabla^2\varrho\|_\infty|\w||\xa-\xb|.
		$$
\end{proof}

\begin{proof}[of Theorem \ref{difLp}]
  At first suppose that $Q=\left(-\frac12,\frac12\right)^n$. Choose a~fixed function $\varrho\in C_0^\infty(\R^n)$ such that $\supp \varrho\subset \left(-\frac14,\frac14\right)^n$
  and $\int_Q\varrho(\x)\dx = 1$. For any $f \in \Cooi(Q)$ define 
		\begin{equation}\label{defB}
			  \Bf(\x):= \int_Q f(\y) \left( \frac{\x-\y}{|\x-\y|^n}  \int_{|\x-\y|}^\infty \varrho\left( \y + \xi \frac{\x-\y}{|\x-\y|}\right) \xi^{n-1} \dxi \right) \dy. 
		\end{equation}
From \cite[Chapter III, Lemma 3.1]{galdi-book-old-1} it follows that $\Bf\in W^{1,\infty}_0(Q)$ and it satisfies \eqref{diveq} in $Q$. 

From now on, we will use the notation 
\begin{align*}
\pv \int_E g(\y)K(\x,\y)\dy :=\lim_{\eps \to  0} \int_{E \cap \{ |\x-\y|>\eps \}} g(\y)K(\x,\y) \dy
\end{align*}
for functions $g \in \Coi(\R^n)$ and kernels $K$ whenever the limit on
the right-hand side exists.

Part (ii) is proved in \cite{katrin-diss,john} without the monotone
dependence of $C_p$ on $[\omega]_{A_p}$. Note that this dependence of
the constant is mentioned in \cite{katrin-paper}. For the sake of the
self-consistency of this paper we sketch its proof here. 
The technique used here is the same as in part (iii), where
full details will be given.  Let $i,j\in\{1,\ldots,n\}$. Since $Q$ is
convex and it contains $\supp \varrho$, by
\cite[p.~119]{galdi-book-old-1} we have
	\begin{align*}
	\partial_j \Bf_i(\x) 
	& = \pv\int_{Q}  f(\y) J_{ij}(\x,\y)\dy 
		 + \int_{Q} f(\y) U_{ij}(\x,\y) \dy \\
	& \qquad + f(\x) \int_{Q} \frac{(x_i-y_i)(x_j-y_j)}{|\x-\y|^2} \varrho(\y) \dy
	\end{align*} 
for any $f\in\Cooi(Q)$. In here,
	$$
		J_{ij}(\x,\y) 
		 := \frac{\delta_{ij}}{|\x-\y|^n} \int_0^\infty	\!\! \varrho \left( \x +  \xi\frac{\x-\y}{|\x-\y|}	\right) \xi^{n-1} \dxi 
		 + \frac{x_i-y_i}{|\x-\y|^{n+1}} \int_0^\infty \!\! \partial_j\varrho \left( \x + \xi \frac{\x-\y}{|\x-\y|} \right) \xi^n \dxi,
	$$
and $U_{ij}$ is a~kernel such that $|U_{ij}(\x,\y)| \le c\,\|\varrho\|_{{1,\infty}}|\x-\y|^{1-n}$, where $c$ is a~positive constant. From \cite[Lemma 6.1]{dr-calderon} we get that
$J_{ij}\in \mathrm{SK}(Q,\kappa)\cap \mathrm{CZ}(Q,\kappa)$, where $\kappa$ depends only on $\|\varrho\|_{{1,\infty}}$ and $n$. By Theorem \ref{CZ-Orlicz}
there exists a~positive nondecreasing function $c_p$ such that 
	\begin{equation}\label{jedna}
		\left( \int_Q \left| \pv\int_{Q}  f(\y) J_{ij}(\x,\y)\dy \right|^p \omega(\x) \dx \right)^\frac1p \le c_p \big( [\omega]_{A_p} \big) \left( \int_Q |f(\x)|^p \omega(\x) \dx \right)^\frac1p
	\end{equation}
for all $f\in\Cooi(Q)$. Furthermore, we have (see \cite[p.~218]{dr-calderon})
	$$
		\int_{Q} f(\y) U_{ij}(\x,\y) \dy \le 2^nc\,\|\varrho\|_{{1,\infty}} Mf(\x)
	$$
for $f\in\Cooi(Q)$ and $\x\in Q$. Hence, by \cite[Theorem 7.1.9(b)]{grafakos-c} there exists a~positive constant $C$ (possibly depending on $p$, $n$) such that 
	\begin{equation}\label{dva}
		\left( \int_Q \left| \int_{Q} f(\y) U_{ij}(\x,\y) \dy \right|^p \omega(\x) \dx \right)^\frac1p \le C[\omega]_{A_p}^{\frac 1{p-1}}\|\varrho\|_{{1,\infty}} \left( \int_Q |f(\x)|^p \omega(\x) \dx \right)^\frac 1p
	\end{equation}
for all $f\in\Cooi(Q)$. Obviously, we also have
	\begin{equation}\label{tri}
		\left( \int_Q \left| f(\x) \int_{Q} \frac{(x_i-y_i)(x_j-y_j)}{|\x-\y|^2} \varrho(\y) \dy \right|^p \omega(\x) \dx \right)^\frac1p
		\le \|\varrho\|_{1} \left( \int_Q |f(\x)|^p \omega(\x) \dx \right)^\frac 1p.
	\end{equation}
Therefore, by combining \eqref{jedna}, \eqref{dva} and \eqref{tri}, we have shown that there exists a~positive nondecreasing function $C_p$ such that \eqref{bogest-p} holds for all $f\in\Cooi(Q)$. Recall however that so far we have assumed $Q=\left(-\frac12,\frac12\right)^n$. The result for a~general cube will be obtained by rescaling at the end of the proof of part (iii).

We continue with part (iii).		
We are going to prove it only for the difference quotient $\dhp$. The proof for $\dhm$ is fully analogous.

We are still assuming that $Q=\left(-\frac12,\frac12\right)^n$ and $\varrho$ is as above. Moreover, suppose that $f\in \Cooi(Q)$, $\x\in Q$ and $h>0$. 
We get the following representation:
\allowdisplaybreaks
\begin{align*}
   \dhp(\Bf)(\x) 
  & = \frac1h \int_Q f(\y)  \frac{ \x + \hei - \y }{ |\x+\hei-\y|^n }
    \int_{|\x+\hei-\y|}^\infty \varrho\left( \y + \xi \frac{ \x + \hei
    - \y }{ |\x+\hei-\y| } \right) \xi^{n-1} \dxi \dy 
  \\ 
  & \quad - \frac1h \int_Q f(\y)  \frac{ \x - \y }{ |\x-\y|^n }
    \int_{|\x-\y|}^\infty \varrho\left( \y + \xi \frac{ \x - \y }{
    |\x-\y| } \right) \xi^{n-1} \dxi \dy 
\\
  & = \frac1h \int_{Q-\hei} f(\y+\hei)  \frac{ \x - \y }{ |\x-\y|^n }
    \int_{|\x-\y|}^\infty \varrho\left( \y + \hei + \xi \frac{ \x - \y
    }{ |\x-\y| } \right) \xi^{n-1} \dxi \dy 
\\ 
  & \quad - \frac1h \int_Q f(\y)  \frac{ \x - \y }{ |\x-\y|^n }
    \int_{|\x-\y|}^\infty \varrho\left( \y + \xi \frac{ \x - \y }{
    |\x-\y| } \right) \xi^{n-1} \dxi \dy 
\\ 
 & = \int_{Q-\hei} \dhp f(\y)   \frac{ \x - \y }{
   |\x-\y|^n } \int_{|\x-\y|}^\infty \varrho\left( \y + \hei + \xi
   \frac{ \x - \y }{ |\x-\y| } \right) \xi^{n-1} \dxi \dy 
\\ 
  & \quad + \frac1h \int_{(Q-\hei)\setminus Q} f(\y)  \frac{ \x - \y
    }{ |\x-\y|^n } \int_{|\x-\y|}^\infty \varrho\left( \y + \hei + \xi
    \frac{ \x - \y }{ |\x-\y| } \right) \xi^{n-1} \dxi \dy 
\\ 
  & \quad - \frac1h \int_{Q\setminus (Q-\hei)} f(\y)  \frac{ \x - \y
    }{ |\x-\y|^n } \int_{|\x-\y|}^\infty \varrho\left( \y + \hei + \xi
    \frac{ \x - \y }{ |\x-\y| } \right) \xi^{n-1} \dxi \dy 
\\ 
  & \quad + \int_Q f(\y)  \frac{ \x - \y }{ |\x-\y|^n }
    \int_{|\x-\y|}^\infty \dhp\varrho\left( \y + \xi \frac{ \x - \y }{ |\x-\y| } \right) \xi^{n-1} \dxi \dy 
\\ 
  & = \int_{Q\cup (Q-\hei)} \dhp f(\y) \frac{ \x -
    \y }{ |\x-\y|^n } \int_{|\x-\y|}^\infty \varrho\left( \y + \hei +
    \xi \frac{ \x - \y }{ |\x-\y| } \right) \xi^{n-1} \dxi \dy 
\\ 
  & \quad + \int_Q f(\y)  \frac{ \x - \y }{ |\x-\y|^n }
    \int_{|\x-\y|}^\infty \dhp \varrho\left( \y + \xi \frac{ \x - \y
    }{ |\x-\y| } \right)\xi^{n-1} \dxi \dy
\\ 
  & =: \mathbf{a}^{h,1}(\x) + \mathbf{a}^{h,2}(\x).
\end{align*}
To get the fourth equality we used the fact that $\supp f \subset Q$. 
A~simple observation yields
	\begin{equation}\label{ind-h}
		\|\varrho(\cdot+\hei)\|_{{1,\infty}} = \|\varrho\|_{{1,\infty}}  \quad \text{for all }h>0.
	\end{equation}
Let $i,j\in\{1,\ldots,n\}$ and assume that $h\in\left(0,\frac14\right)$. 
Then the set $Q\cup(Q-\hei)$ is convex and it contains $\supp\varrho(\cdot+\hei)$. Hence, we may use
\cite[p.~119]{galdi-book-old-1} to get the following representation:
\begin{align*}
  \partial_j a^{h,1}_i(\x) 
  & = \pv\int_{Q\cup (Q-\hei)} \dhp f(\y) K^h_{ij}(\x,\y)\dy \\
  & \qquad + \int_{Q\cup (Q-\hei)} \dhp f(\y) G^h_{ij}(\x,\y) \dy \\
  & \qquad + \dhp f(\x) \int_{Q\cup (Q-\hei)} \frac{(x_i-y_i)(x_j-y_j)}{|\x-\y|^2} \varrho(\y+\hei) \dy.\\
\end{align*}
In here, $G_{ij}$ is a~kernel such that 
	$ |G^h_{ij}(\x,\y)| \le c\, \|\varrho\|_{{1,\infty}} |\x-\y|^{1-n}$ for all $\x,\y\in Q\cup(Q-\hei)$, $\x\ne\y$.
The positive constant $c$ here depends only on $n$. We have also used \eqref{ind-h} here. Furthermore, the kernel $K^h_{i,j}$ is expressed as
\begin{align}
  K^h_{ij}(\x,\y) 
  & := \frac{\delta_{ij}}{|\x-\y|^n} \int_0^\infty
    \varrho \left( \x + \hei + \xi\frac{\x-\y}{|\x-\y|}
    \right) \xi^{n-1} \dxi \nonumber \\ 
  & \quad + \frac{x_i-y_i}{|\x-\y|^{n+1}}
    \int_0^\infty \partial_j\varrho \left( \x + \hei +
    \xi \frac{\x-\y}{|\x-\y|} \right) \xi^n
    \dxi\nonumber
\end{align}
for any $\x,\y\in\R^n$, $\x\ne\y$. 
From the restriction $h\in\left(0,\frac14\right)$ it follows that 
	$$
		Q\cup (Q-h\be_k) \subset E := {Q\cup \left(Q-\textstyle\frac14\be_k\right)}.
	$$	
In \cite[Lemma 6.1]{dr-calderon} it is
proved that there exist constants $\kappa_1,\kappa_2\in(0,\infty)$ such that for every $h>0$ it holds that $K^h_{ij}\in \mathrm{SK}(E,\kappa_1) \cap \mathrm{CZ}(E,\kappa_2)$. The constants $\kappa_1$, $\kappa_2$ depend on $n$ and $\|\varrho(\cdot+\hei)\|_{{1,\infty}}$. 
From \eqref{ind-h} it follows that $\kappa_1$, $\kappa_2$ are again independent of $h$. 

Theorem~\ref{CZ-Orlicz} now grants the existence of a~positive nondecreasing function $\widetilde{c}_{p,1}$ such that for all $f\in\Cooi(Q)$ and all $h\in\left(0,\frac14\right)$ the inequality
\begin{align*}
  &\left( \int_{Q} \left| \pv\int_{Q\cup (Q-\hei)} \dhp f(\y)
    K^h_{ij}(\x,\y)\dy \right|^p \omega(\x) \dx \right)^\frac 1p
  \\
  &\le \widetilde{c}_{p,1} \big( [\omega]_{A_p} \big) \left( \int_{Q\cup
    (Q-\hei)} \big|\dhp f(\x)\big|^p \omega(\x)  \dx \right)^\frac
    1p. 
\end{align*}
holds true. In the same way as in part (ii), there exists a~positive constant $C$ such that
\begin{align*}
  &\left( \int_{Q} \left| \int_{Q\cup (Q-\hei)} \dhp f(\y)
    G^h_{ij}(\x,\y)\dy \right|^p \omega(\x) \dx \right)^\frac 1p
  \\ 
  &\le C[\omega]^\frac1{p-1}_{A_p} \|\varrho\|_{{1,\infty}} \left(
    \int_{Q\cup (Q-\hei)} \big| \dhp f(\x) \big|^p \omega(\x) \dx
    \right)^\frac 1p 
\end{align*}
and 
\begin{align*}
  &\left( \int_{Q} \left|\dhp f(\y)\int_{Q\cup (Q-\hei)}
    \frac{(x_i-y_i)(x_j-y_j)}{|\x-\y|^2} \varrho(\y + \hei) \dy
    \right|^p \omega(\x) \dx \right)^\frac 1p
  \\ 
  &\le \|\varrho\|_{1} \left( \int_{Q} \big|\dhp f(\x)\big|^p
    \omega(\x) \dx \right)^\frac 1p  
\end{align*}
both hold for all $f\in\Cooi(Q)$ and all $h\in\left(0,\frac14\right)$. It follows from the obtained estimates that there exists a~positive nondecreasing function $\widetilde{C}_{p,1}$ such that
\begin{equation}\label{cast1}
  \left( \int_{Q} \left| \partial_j a^{h,1}_i(\x) \right|^p \omega(\x) \dx \right)^\frac 1p 
  \le \widetilde{C}_{p,1} \big( [\omega]_{A_p} \big) \left( \int_{Q\cup (Q-\hei)} \big| \dhp f(\x)\big|^p \omega(\x) \dx \right)^\frac 1p
\end{equation}
holds for all $f\in\Cooi(Q)$ and all $h\in\left(0,\frac14\right)$. 

Let us proceed by estimating the partial derivatives of $\mathbf{a}^2(\x)$, still under the assumption \mbox{$h\in\left(0,\frac14\right)$}. 
Since $\supp \varrho\subset \left(-\frac14,\frac14\right)^n$ holds, the
function $\dhp \varrho$ is supported in $Q$.
The cube $Q$ is convex, thus we may again use the same calculation as in \cite[p.~119]{galdi-book-old-1} to obtain
\begin{align}
  \partial_j a^{h,2}_i(\x) 
  & = \pv\int_{Q} f(\y) M^h_{ij}(\x,\y)\dy 
   + \int_{Q} f(\y) H^h_{ij}(\x,\y) \dy \label{dz2}\\
  & \qquad + f(\x) \int_{Q} \frac{(x_i-y_i)(x_j-y_j)}{|\x-\y|^2}
    \dhp\varrho(\y) \dy.\nonumber
\end{align}
In here, $H^h_{ij}$ is a~kernel satisfying 
$
  |H^h_{ij}(\x,\y)|\le C|\x-\y|^{1-n}
$
for all $\x,\y\in Q$, $\x\ne\y$, with $C$ depending only on $n$ and $\|\varrho\|_{{2,\infty}}$.
Furthermore, the kernel $M^h_{ij}$ is, for $\x,\y\in \R^n$, $\x\ne\y$,
defined as follows:
\begin{align*}
  M^h_{ij}(\x,\y) 
  & := \frac{\delta_{ij}}{|\x-\y|^n} \int_0^\infty
    \dhp\varrho\left( \x + \xi \frac{ \x - \y }{ |\x-\y| }\right) \xi^{n-1} \dxi \\ 
  & \qquad + \frac{x_i-y_i}{|\x-\y|^{n+1}} \int_0^\infty
    \dhp\partial_j\varrho\left( \x + \xi \frac{ \x - \y }{ |\x-\y| }\right) \xi^{n} \dxi.\nonumber\\ 
  & =: m^{ h,1}_{ij}(\x,\y) + m^{h,2}_{ij}(\x,\y).
\end{align*}
Observe that if $\x,\y\in Q$ and $\xi>n$, then
	$$
		\xi - h > n - \textstyle\frac14 > \frac{\sqrt{n}}4 = \diam\,(\supp \varrho),
	$$
and therefore
	$$
		\x + \xi \frac{ \x - \y }{ |\x-\y| } \notin \supp\varrho \quad \text{and} \quad  \x + \hei + \xi \frac{ \x - \y }{ |\x-\y|} \notin \supp\varrho.
	$$
Hence, the integrals over $(0,\infty)$ in
the definition of $M^h_{ij}$ may be replaced by integrals over
$(0,n)$.

From \cite[Lemma 6.1]{dr-calderon} it follows that $m^{h,1}_{ij}$ is a~Calder\'on-Zygmund kernel with respect to $Q$, with a~constant independent of $h$. Here, notice that the function $\dhp\varrho$ plays the role of $\varrho$ in \cite{dr-calderon}, and $\|\dhp\varrho\|_{{1,\infty}}\le \|\varrho\|_{{2,\infty}}$. In the next step, we shall verify that $m^{h,1}_{ij}$ is a~standard kernel with respect to $Q$ with a~constant independent of $h$. Let $\x,\y,\z\in Q$ be such that $\x\ne\y$ and $|\x-\z|\le \frac12|\x-\y|$. It is easy to see that 
	$$
		 |m^1_{ij}(\x,\y)| \le \frac{\|\nabla\varrho\|_\infty}{|\x-\y|^n} \int_0^{n} \xi^{n-1}\dxi = \frac{\|\nabla\varrho\|_\infty n^{n-1}}{|\x-\y|^n}.
	$$
Using \eqref{123}, we obtain
	\begin{equation}\label{dif01}
		\left|\frac1{|\x-\y|^n}-\frac1{|\y-\z|^n}\right| = \frac{\big||\y-\z|-|\x-\y|\big|}{|\x-\y|^n|\y-\z|^n} \cdot \sum_{b=0}^{n-1}|\y-\z|^b|\x-\y|^{n-b-1} \le  \frac{2^nn|\x-\z|}{|\x-\y|^{n+1}}
	\end{equation} 
and 
	\begin{align}
		\left| \frac{x_i-y_i}{|\x-\y|^{n+1}} - \frac{z_i-y_i}{|\z-\y|^{n+1}} \right| & 
		\le \frac{|x_i-z_i|}{|\x-\y|^{n+1}} + |y_i-z_i| \left| \frac1{|\x-\y|^{n+1}} - \frac1{|\z-\y|^{n+1}} \right| \nonumber\\ 
		& \le \left( 1 + 3\cdot 2^n (n+1) \right) \frac{|\x-\z|}{|\x-\y|^{n+1}}. \label{dif03} 
	\end{align}	
Moreover, we have
	\begin{equation}\label{dif02}
		\left| \frac{\x-\y}{|\x-\y|} - \frac{\z-\y}{|\z-\y|} \right| = \left| \frac{\x-\z}{|\x-\y|} + (\z-\y)\frac{|\z-\y|-|\x-\y|}{|\x-\y||\z-\y|} \right| \le 2 \frac{|\x-\z|}{|\x-\y|}.
	\end{equation}	
Now we use \eqref{dif01}, \eqref{dif02}, Lemma \ref{2dif} and the inequality 
	$$
		|\x-\z| \le \frac{\diam Q\,|\x-\z|}{|\x-\y|} = \frac{2\sqrt{n}\,|\x-\z|}{|\x-\y|}
	$$
to get 
	 \begin{align*}
		 & |m^{h,1}_{ij}(\x,\y)-m^{h,1}_{ij}(\z,\y)| \\
		 & \quad\le \left|\frac1{|\x-\y|^n}-\frac1{|\y-\z|^n}\right| \int_0^n \left| \dhp\varrho\left( \z + \xi \frac{ \z - \y }{ |\z-\y| } \right) \right| \xi^{n-1} \dxi\\	
		 & \quad\qquad + \frac1{|\x-\y|^n} \int_0^n \left| \dhp\varrho\left( \x + \xi \frac{ \x - \y }{ |\x-\y|} \right) - \dhp\varrho\left( \z + \xi \frac{ \z - \y }{ |\z-\y|} \right) \right| \xi^{n-1} \dxi\\
		 & \quad \le (2n)^n \|\nabla\varrho\|_\infty \frac{|\x-\z|}{|\x-\y|^{n+1}} + \frac{\|\nabla^2\varrho\|_\infty}{|\x-\y|^n} \int_0^n \left( |\x-\z| + 2\xi\frac{|\x-\z|}{|\x-\y|} \right) \xi^{n-1}\dxi\\
		 & \quad \le ( 2^n + 4 )n^n\|\varrho\|_{{2,\infty}} \frac{|\x-\z|}{|\x-\y|^{n+1}}.
	\end{align*}
Analogously, we obtain the following:
	\begin{align*}
		& |m^{h,1}_{ij}(\y,\x)-m^{h.1}_{ij}(\y,\z)| \\
		& \quad \le \left|\frac1{|\x-\y|^n}-\frac1{|\y-\z|^n}\right| \int_0^{n} \left| \dhp\varrho\left( \y + \xi \frac{ \y - \z }{ |\y-\z| } \right) \right| \xi^{n-1} \dxi\\	
		& \quad \qquad + \frac1{|\x-\y|^n} \int_0^{n} \left| \dhp\varrho\left( \y + \xi \frac{ \y - \x }{ |\y-\x| } \right) - \dhp\varrho\left( \y + \xi \frac{ \y - \z }{ |\y-\z|} \right) \right| \xi^{n-1} \dxi\\
		& \quad \le 2^{n+1}n^{n} \|\varrho\|_{{2,\infty}} \frac{|\x-\z|}{|\x-\y|^{n+1}}.
	\end{align*}
Therefore, $m^{h,1}_{ij}$ is a~standard kernel with respect to $Q$ with a~constant independent of $h$. Analogously, using  \eqref{123}, \eqref{dif03} and \eqref{dif02}, we show that $m^{h,2}_{ij}$ is a~standard kernel with respect to $Q$ with a~constant depending only on $n$ and $\|\varrho\|_{{3,\infty}}$. Hence, $M^{h}_{ij}$ is a~standard kernel with respect to $Q$ with a~constant independent of $h$.
By Theorem \ref{CZ-Orlicz} there exists a~positive nondecreasing function $\widetilde{c}_{p,2}$ such that for all $h\in\left(0,\frac14\right)$ and all $f\in\Cooi(Q)$ we have
		$$
			\left( \int_{Q} \left| \pv\int_{Q} f(\y) M^h_{ij}(\x,\y)\dy  \right|^p \omega(\x) \dx \right)^\frac1p
			\le \widetilde{c}_{p,2} \big( [\omega]_{A_p} \big) \left( \int_{Q} \left|f(\x)\right|^p \omega(\x) \dx \right)^\frac1p.
		$$
The remaining parts of \eqref{dz2} are treated analogously as their counterparts in $\frac{\partial a^1_j}{\partial x_i}$. Altogether, it follows that there exists a~positive nondecreasing function $\widetilde{C}_{p,2}$ such that
	$$
		\left( \int_{Q} \left| \frac{ \partial}{\partial x_j} a^{h,2}_i(\x) \right|^p \omega(\x) \dx \right)^\frac1p  
		\le \widetilde{C}_{p,2} \big( [\omega]_{A_p} \big) \left( \int_{Q} |f(\x)|^p \omega(\x) \dx \right)^\frac1p
	$$
holds for all $h\in(0,\frac14)$ and $f\in\Cooi(Q)$. Using this result and \eqref{cast1}, we obtain the existence of a~positive nondecreasing function $\widetilde{C}_p$ such that the following holds for all $h\in\left(0,\frac14\right)$ and all $f\in\Cooi(Q)$:
	\begin{align*}
		\left(  \int_Q \big|\dhp\nabla(\Bf)(\x)\big|^p \omega(\x)\dx \right)^\frac1p \!\!
		& \le \widetilde{C}_{p} \big( [\omega]_{A_p} \big) \left( \int_{Q} \left( \big| \nabla \mathbf{a}^{h,1}(\x) \big| + \big| \nabla \mathbf{a}^{h,2}(\x) \big|\right)^p \omega(\x) \dx \right)^\frac1p\\
		& \le \widetilde{C}_{p} \big( [\omega]_{A_p} \big) \left( \int_{Q\cup (Q-\hei)} \left( \big| \dhp f(\x) \big| + |f(\x)| \right)^p \omega(\x) \dx \right)^\frac1p\\
		& \le \widetilde{C}_{p} \big( [\omega]_{A_p} \big) \left( \int_{Q} \! \left( \big| \dhp f(\x) \big| \! + \! \big| \dhm f(\x) \big| \! + \! |f(\x)| \right)^p \! \omega(\x) \dx \right)^\frac1p\!\!.
	\end{align*}
As next, assume that $h\in\left[\frac14,\infty\right).$ Let $C_p$ be the function obtained in part (ii). By \eqref{bogest-p} we have
	\begin{align*}
		\left(  \int_Q \big|\dhp\nabla(\Bf)(\x)\big|^p \omega(\x) \dx \right)^\frac1p \! \! & \le C_{p} \big( [\omega]_{A_p} \big) \left( \int_Q \left|\frac{\nabla(\Bf)(\x+\hei)}h-\frac{\nabla(\Bf)(\x)}h\right|^p \omega(\x) \dx \right)^\frac1p \\
		& \le 4C_{p} \big( [\omega]_{A_p} \big) \left( \int_Q  \!\! \left( |\nabla(\Bf)(\x\!+\!\hei)| \! + \! |\nabla(\Bf)(\x)| \right)^p \!\omega(\x) \dx \! \right)^\frac1p\\
		& \le 8C_{p} \big( [\omega]_{A_p} \big) \left( \int_Q |\nabla(\Bf)(\x)|^p \omega(\x) \dx \right)^\frac1p\\
		& \le 8C_{p} \big( [\omega]_{A_p} \big) \left( \int_Q |f(\x)|^p \omega(\x) \dx \right)^\frac1p
	\end{align*}
for all $f\in\Cooi(Q)$ and $h\ge\frac14$. In the third step we have used the fact that $\nabla(\Bf)(\x)=0$ for $\x\in\R^n\setminus Q$.

At this point we have proved that if $Q=\left(-\frac12,\frac12\right)^n$, then there exist positive nondecreasing functions $C_p$, $\widetilde{C}_p$ such that for all $f\in\Cooi(Q)$ and all $h>0$ the inequalities \eqref{bogest-p} and
	$$
		\left( \int_Q \big|\dhp\nabla(\Bf)(\x)\big|^p \omega(\x) \dx \right)^\frac1p \!\! \le \widetilde{C}_p \big( [\omega]_{A_p} \big) \! \left( \int_{Q} \!\! \left( \big| \dhp f(\x) \big| \!+ \!\big| \dhm f(\x) \big| \!+\! |f(\x)| \right)^p \!\omega(\x) \dx \right)^\frac1p
	$$
are satisfied. In here, the operator $\B$ is defined by \eqref{defB}. Now consider the cube $\lambda Q=\left(-\frac\lambda2,\frac\lambda2\right)^n$ with a~fixed $\lambda>0$. Define $\varrho_\lambda(\x):=\lambda^{-n}\varrho(\x)$ for $\x\in\lambda Q$. Then $\varrho_\lambda\in\Coi(\lambda Q)$ and $\int_{\lambda Q} \varrho_\lambda(\x)\dx = 1$. 
For any $f\in\Cooi(\lambda Q)$ define $\B_\lambda f$ by the formula \eqref{defB}, where $Q$ and $\varrho$ are replaced by $\lambda Q$ and $\varrho_\lambda$, respectively. Similarly as before one verifies that $\div \B_\lambda f= f$ holds a.e.~in $\lambda Q$. 
Using the proven result for $Q$ and rescaling through the change of variables $\x\to\lambda\x$, we obtain
	$$  
		 \left(\int_{\lambda Q} |\nabla(\B_\lambda f)(\x)|^p \omega(\x) \dx \right)^\frac1p \le C_p \big( [\omega]_{A_p} \big) \left( \int_{\lambda Q} |f(\x)|^p \omega(\x) \dx \right)^\frac1p
	$$
and
	$$
		 \left(\int_{\lambda Q} \!\big|\dhpm \!\nabla(\B_\lambda f)(\x)\big|^p \omega(\x) \dx \right)^\frac1p \!\!\!\!\le \!\widetilde{C}_p \big( [\omega]_{A_p} \big) \!\!\left( \int_{\lambda Q} \!\!\left( \big|\dhp f(\x)\big| \!+\! \big|\dhm f(\x)\big| \!+\! \frac{|f(\x)|}{\lambda} \right)^p \!\! \omega(\x) \dx \right)^\frac1p\!\!. 
	$$
Notice also that $\lambda=\ell(\lambda Q)$. Obviously, this result remains unchanged when the cube $\lambda Q$ is shifted, e.g.,~replaced by $\x_0+\lambda Q$ for any fixed $\x_0\in\R^n$. Hence, we have now proved (i)--(iii) in full generality.

Part (iv) is proven by a~standard approximation argument, using density of $\Cooi(Q)$ in $L^p_{\omega,0}(Q)$ for any fixed $p$ and $\omega$. This completes the proof.
\end{proof}

\begin{rem}
	Corollary \ref{difOrlicz} follows now from Theorems \ref{difLp} and \ref{extra}.
\end{rem}

\section{Interior Regularity}

From now on, let $\Omega \subset \R^n$ be a~bounded domain. We also
assume that $\bS$ has \mbox{a~$(p,\delta)$-structure} for some
$p \in (1,\infty)$ and $\delta \ge 0$.  The properties of $\bS$ and
the standard theory of monotone operators imply in a~standard way  the
existence of a~unique $\u\in W^{1,p}_{0,\divo}(\Omega)$, satisfying
\begin{equation} \label{eq:rovnice}
  \int_\Omega\bS(\Du)\cdot\Dw \dx=\intO \f \cdot \w \dx
\end{equation}
for all $\bw\in W^{1,p}_{0,\divo}(\Omega)$, i.e., $\bu$ is a weak
solution of \eqref{eq}. By choosing $\bw=\bu$ in \eqref{eq:rovnice} and
using Proposition~\ref{lem:hammer}, the properties of $\bS$, the
Poincar\'e inequality (cf.~\cite[Lemma 3]{talenti90}), Korn inequality
(cf.~\cite[Theorem~6.10]{john}) and Young inequality
\eqref{eq:young:1}, we obtain that this solution satisfies the
a~priori estimate
\begin{align*}
  \gamma_0(p)\intO \varphi(|\nabla\u|)  \dx \le c\, \intO \phi^*(\abs{\f})\dx .
\end{align*}
From here on, we denote by $\gamma_i(p)$, $i=0,1$, the constants in
Definition~\ref{ass:1} for a~given~$p$. Moreover, all
constants may depend on the characteristics of $\bS$,
$\diam (\Omega)$, $\abs{\Omega}$, the space dimension $n$ and on the
John constants of $\Omega$.
The dependence on these quantities will not be mentioned explicitly anymore. However, dependence on
other quantities will be specified.

\begin{thm}
  \label{thm:MT1}
  Let the extra stress tensor $\bS$ have
  a~$(p,\delta)$-structure for some $p \in (1,2]$ and
  $\delta \in [0,\infty)$, and let $\bF$ be the associated tensor
  field to $\bS$. Let $\Omega \subset\R^n$ be a~bounded domain 
  and let $\f\in L^{\phi ^*}(\Omega)$. Let $Q \subseteq \Omega$ be a~cube and let $\xi \in \Coi(\Omega)$ satisfy
  $\chi_{\frac 12 Q} \le \xi \le \chi_{\frac 34Q}$. Then the unique
  weak solution $\u\in W^{1,p}_{0,\divo}(\Omega)$ of the
  problem~\eqref{eq:rovnice} satisfies
  \begin{equation}
    \begin{aligned}
      \label{eq:est-reg-F2}
      \int_{\frac 12 Q}   \abs{\nabla \bF(\Du)}^2\dx
          \le  c(\norm{\xi}_{2,\infty}) \int_Q {\phi^*}(|\f|)
          +\phi(\abs {\nabla \bu})\dx .
    \end{aligned}
  \end{equation}  
\end{thm}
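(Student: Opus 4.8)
The plan is to run the classical difference-quotient (Nirenberg translation) argument, but with a \emph{solenoidal} localized difference quotient as test function — obtained by correcting $\dhm(\xi^2\dhp\bu)$ with a Bogovski term — so that the pressure is never needed; the refined Bogovski estimate of Corollary~\ref{difOrlicz}(iii) is precisely what makes this possible. Fix $k\in\{1,\dots,n\}$ and $h>0$ small, and extend $\bu$ by zero outside $\Omega$. Since $\divo\dhp\bu=\dhp\divo\bu=0$, the function $g_h:=\divo(\xi^2\dhp\bu)=\nabla(\xi^2)\cdot\dhp\bu$ has $\spt g_h\subseteq\overline{\tfrac34 Q}\subset Q$ and $\intO g_h\dx=0$, hence $g_h\in L^\phi_0(Q)$ with $\int_Q\phi(\abs{g_h})\dx$ bounded uniformly in $h$ (by \eqref{eq:2} applied to $\bu$). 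Let $\B=\B_Q$ be the operator of Corollary~\ref{difOrlicz} on the cube $Q$ and put $\w:=\dhm(\xi^2\dhp\bu-\B g_h)$. Then $\divo(\xi^2\dhp\bu-\B g_h)=g_h-g_h=0$, so $\divo\w=0$, and for $h$ small $\w\in W^{1,p}_{0,\divo}(\Omega)$; I would use it as test function in \eqref{eq:rovnice}.

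For the left-hand side, commuting $\bfD$ with $\dhm$ (by \eqref{eq:1a}) and applying the partial-integration formula for difference quotients yields $\intO\bS(\Du)\cdot\bfD\w\dx=\intO\dhp\bS(\Du)\cdot\bfD(\xi^2\dhp\bu-\B g_h)\dx$, and $\bfD(\xi^2\dhp\bu)=\xi^2\dhp\Du+2\xi\,\nabla\xi\otimess\dhp\bu$. By \eqref{eq:3a} (i.e.\ Proposition~\ref{lem:hammer}) the leading term $\int\xi^2\,\dhp\bS(\Du)\cdot\dhp\Du\dx$ is pointwise nonnegative and $\sim\int\xi^2\abs{\dhp\bF(\Du)}^2\dx$, and since $\xi\equiv1$ on $\tfrac12 Q$ it dominates $c\int_{\frac12 Q}\abs{\dhp\bF(\Du)}^2\dx$; this is the quantity that will survive on the left after letting $h\to0$.

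It remains to bound the error terms. The cutoff term $\int\dhp\bS(\Du)\cdot2\xi\,\nabla\xi\otimess\dhp\bu\dx$ is handled by Young's inequality for the shifted $N$-functions $\{\phi_a\}$ together with the equivalences \eqref{eq:3a}: a small multiple of $\int\xi^2\abs{\dhp\bF(\Du)}^2\dx$ is absorbed on the left and one is left with $c(\norm{\xi}_{2,\infty})\int_Q\phi(\abs{\nabla\bu})\dx$, where \eqref{eq:2} is used to replace $\phi(\abs{\dhp\bu})$ by $\phi(\abs{\partial_k\bu})$. The force term $\intO\f\cdot\w\dx$ is estimated after transferring one difference quotient via the partial-integration formula and using Young's inequality $ts\le\eps\,\phi^*(t)+c_\eps\,\phi(s)$, producing $c\int_Q\phi^*(\abs{\f})\dx$ plus lower-order contributions. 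The decisive term is the Bogovski contribution $\int\dhp\bS(\Du)\cdot\bfD(\B g_h)\dx$: I would transfer the difference quotient, write it as $\int\bS(\Du)\cdot\bfD(\dhm\B g_h)\dx$, apply Young's inequality to obtain $c\int_Q\phi^*(\abs{\bS(\Du)})\dx+\eps\int_Q\phi(\abs{\dhm\nabla(\B g_h)})\dx$ — the first summand is $\sim\int_Q\phi(\abs{\nabla\bu})\dx$ by Proposition~\ref{lem:hammer} — and then invoke Corollary~\ref{difOrlicz}(iii) to dominate the second summand by $\eps\,\widetilde C_\phi\int_Q\phi(\abs{\dhp g_h}+\abs{\dhm g_h}+\abs{g_h}/\ell(Q))\dx$. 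Collecting everything should give, for all sufficiently small $h>0$,
\[
  \int_{\frac12 Q}\abs{\dhp\bF(\Du)}^2\dx\le c(\norm{\xi}_{2,\infty})\int_Q\phi^*(\abs{\f})+\phi(\abs{\nabla\bu})\dx,
\]
whence \eqref{eq:est-reg-F2} follows by letting $h\to0$ and summing over $k=1,\dots,n$, using the converse difference-quotient estimate \eqref{eq:2a}--\eqref{eq:2b} with $\psi(t)=t^2$ (note $\bF(\Du)\in L^2(\Omega)$ by the a priori estimate, so the left-hand side is finite for each $h$).

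The hard part is precisely this Bogovski term. It is the only step where the new machinery is indispensable — controlling $\dhm\nabla(\B g_h)$ requires the difference-quotient estimate \eqref{R2-p} in its Orlicz form, not merely the classical Calder\'on--Zygmund bound on $\nabla\B$ — and one must then check that the estimate \emph{closes}: by the product rule the difference quotients $\dhpm g_h$ contain the second-order quotient $\dhpm\dhp\bu$, which is genuinely second order in $\bu$ and has no uniform-in-$h$ bound by $\int_Q\phi(\abs{\nabla\bu})\dx$ alone for $p<2$. Reabsorbing it — e.g.\ by combining the interpolation $\abs{\dhpm\dhp\bu}^2\lesssim(\delta+\abs{\Du}+\dots)^{2-p}\abs{\dhp\bF(\Du)}^2$ available for $p\le2$ with the a priori bound and a small multiple of the left-hand side, possibly after an absorption/iteration over a chain of concentric sub-cubes $\tfrac12 Q\subseteq B_r\subseteq B_R\subseteq\tfrac34 Q$ to handle the fact that $\nabla(\B g_h)$ is not supported in $\tfrac12 Q$ — and keeping careful track of all shifts in the degenerate $(p,\delta)$-structure is where essentially all of the work sits; everything else is the standard translation-method bookkeeping.
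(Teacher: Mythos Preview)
Your overall strategy---the solenoidal test function $\bw=\dhm(\xi^2\dhp\bu-\B g_h)$, the identification of the Bogovski contribution as the crux, and the use of Corollary~\ref{difOrlicz}(iii) to control $\dhm\nabla(\B g_h)$---is exactly the paper's approach. You have also correctly spotted that after applying Corollary~\ref{difOrlicz}(iii) the quantity $\dhpm g_h$ contains a second-order piece in $\bu$.

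Where you diverge from the paper is in the closure, and you are making it much harder than it is. No iteration over concentric sub-cubes is needed, and the pointwise interpolation you suggest is not the right tool (as written it controls $\dhp\Du$, not $\dhpm\dhp\bu$). The paper closes the estimate with two simple observations. First, write $g_h=2\nabla\xi\cdot(\xi\,\dhp\bu)$ and apply the product rule to obtain
\[
  \abs{\dhpm g_h}\le c(\norm{\xi}_{2,\infty})\,\xi\,\abs{\dhp\bu}+c(\norm{\xi}_{1,\infty})\,\abs{\dhpm(\xi\,\dhp\bu)};
\]
this grouping is the point, because now \eqref{eq:2} applied to $\xi\,\dhp\bu$ gives $\int\phi(\abs{\dhpm(\xi\,\dhp\bu)})\le\int\phi(\abs{\nabla(\xi\,\dhp\bu)})$, and expanding the gradient yields only $\eps\int\phi(\xi\,\abs{\nabla\dhp\bu})$ plus terms controlled by $\int_Q\phi(\abs{\nabla\bu})$. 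Second---and this is the ingredient you are missing---the paper invokes the auxiliary inequality (cited as \cite[Lemma~3.11]{br-reg-shearthin})
\[
  \int_\Omega\phi\big(\xi\,\abs{\nabla\dhp\bu}\big)\dx
  \le c\int_\Omega\xi^2\,\abs{\dhp\bF(\Du)}^2\dx
  + c(\norm{\xi}_{1,\infty})\int_Q\phi(\abs{\nabla\bu})\dx,
\]
which is added to the main coercivity term to form the left-hand side of the working inequality \eqref{eq:tang}. With this augmented left-hand side, \emph{every} error term from $I_1,\ldots,I_4$ produces at worst $\eps\,c\int\phi(\xi\,\abs{\nabla\dhp\bu})$, which is absorbed directly for $\eps$ small. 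The same device handles the commutator term $I_2$ as well: the paper keeps the difference quotient on $2\xi\nabla\xi\otimess\dhp\bu$ (rather than transferring it onto $\bS(\Du)$ and using shifted Young as you propose), applies \eqref{eq:yg} and \eqref{eq:2}, and again lands on $\eps\int\phi(\xi\,\abs{\nabla\dhp\bu})$. In short, once you know the auxiliary inequality above, the proof is routine bookkeeping with no iteration; the ``hard part'' you anticipate dissolves.
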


\begin{proof}
  Let $k\in\{1,\ldots,n\}$. We would like to use
  $\w=\dhm\big(\xi^2 \, \dhp\u\big)$ as a~test function in
  \eqref{eq:rovnice}. However, this is not possible since $\divo \w \neq  0$. 
  Instead, we start by using Corollary~\ref{difOrlicz}, where $Q$ is replaced by $\tfrac 34 Q$, with the setting 
  \begin{equation*}
    f:=\divo(\xi^2\, \dhp \u), \quad h<\tfrac{1}{4} \ell(Q),\quad \psi =\phi.
  \end{equation*}
  It provides
  a~function $\vv \in W^{1,\phi}_{0}(\tfrac 34 Q)$ which solves
  \begin{equation*}
  \begin{aligned}
    \divo \vv &= \divo(\xi^2\, \dhp \u) &&\qquad \text{in }\tfrac 34 Q, \\
    \vv &=\bfzero &&\qquad  \text{on }\partial\tfrac 34 Q,
  \end{aligned}
  \end{equation*}
  and satisfies the corresponding estimates in
  Corollary~\ref{difOrlicz}. Therefore\footnote{We extend $\vv$ and $f$ by
    zero to the whole domain $\Omega$.}, the vector field  
	$\bw=$ \linebreak $\dhm\big (\xi ^2 \, \dhp\u -\vv\big )$ 
  belongs to $W^{1,\phi}_{0,\divo}(\Omega)$ and from \eqref{eq:rovnice} we get
\begin{equation}
  \label{eq:dtTx2}
  \begin{aligned}
    \intO& \xi^2\dhp{\T(\Du)}\cdot \dhp \Du \dx
    \\
    =&\intO  \T(\Du)\cdot \bD \dhm \vv -\T(\Du)\cdot{\dhm\big(2\xi \nabla \xi
      \otimess \dhp\u\big)} \dx
    \\
    &+\intO\f\cdot\dhm(\xi^2 \dhp \u) - \f \cdot \dhm \vv \dx=:\sum_{j=1}^{4} I_j.
  \end{aligned}
\end{equation}
The term providing the information concerning the regularity of the
solution is the integral on the left-hand side. Indeed, from \eqref{eq:3a} one gets
\begin{equation*}
    \intO \xi^2 \bigabs{ \dhp \bF (\Du) }^2\dx \le c \intO\xi ^2 \dhp{\T(\Du)}\cdot \dhp \Du \dx. 
\end{equation*}
In the same way as in \cite[Lemma 3.11]{br-reg-shearthin} one can show
that 
\begin{equation*}
  \begin{aligned}
    \intO \phi\big(\xi \abs{\nabla \dhpm\u}\big) \dx &\le c\,\intO
    \xi^2 \bigabs{ \dhpm \bF (\Du) }^2 \dx + c(
    \norm{\xi}_{1,\infty})
    \int_{Q} \phi \big (\abs{ \nabla\u }\big ) \dx. \hspace*{-3mm}
  \end{aligned}
\end{equation*}
Combining the last two inequalities together with \eqref{eq:dtTx2}, we obtain the inequality
\begin{equation}\label{eq:tang}
  \intO  \xi^2 \bigabs{ \dhp \bF (\Du) }^2  
          + \phi (\xi \abs{\nabla \dhp \u})\dx    \le c \,
          \sum_{j=1}^{4} |I_j|+ c(\norm{\xi}_{1,\infty}) \int_Q
          {\phi}(|\nabla \bu|)\dx.  
\end{equation}
Now we are going to find an~appropriate estimate for each of the terms $I_j$.
At first, observe that if $\bQ\in\R^{n\times n}_\sym$ and $t>0$, then \eqref{eq:hammere} (with $\bP=\mathbf 0$) and \eqref{eq:young:2} (recall that $\Delta_2(\phi)<\infty$) yield 
	\begin{equation}\label{eq:yg}
		|\bS(\bQ)|t \le c \, \phi'(|\bQ|) t \le c(\eps^{-1}) \phi(|\bQ|) + \eps \,\phi(t).
	\end{equation}
Making use of \eqref{eq:yg}, \eqref{eq:2} and the $\Delta_2$-condition for $\phi$, we have
\begin{align*}
  \bigabs{I_2}
  &\le c(\eps ^{-1}) \int_{Q}  \phi(\abs {\bD  \u}) \dx + \eps \int_{\Omega}
    \phi\big (\abs{\dhm{\big(2\xi \nabla \xi    \otimess \dhp\u\big)}}\big )\dx
  \\
  &\le c(\eps ^{-1}) \int_{Q}  \phi(\abs {\bD \u}) \dx +  \eps  \int_{\Omega}
    \phi\big (\abs{\nabla {\big(2\xi \nabla \xi \otimess \dhp\u\big)}}\big )\dx
  \\
  & \le c(\eps ^{-1},\norm{\xi}_{2,\infty}) \int_{Q}  \phi(\abs {\nabla
    \u}) \dx + \eps \, c(\norm{\xi}_{1,\infty})\int_{\Omega}
    \phi\big (\abs{\xi \nabla \dhp\u }\big )\dx.
\end{align*}
Next, inequality \eqref{eq:young:1}, the fact that $\Delta_2(\phi)<\infty$ and $0\le\xi^2\le\xi$,  imply
\begin{align*}
    \bigabs{I_3} &\le c(\eps ^{-1}) \int_{Q}  \phi^*(|\f|) \dx + \eps \int_{\Omega}
        \phi\big(|\dhm(\xi^2 \dhp \u)| \big)\dx
    \\
    &\le c(\eps ^{-1}) \int_{Q}  \phi^*(|\f|) \dx   +  \eps  \int_{\Omega}
        \phi\big( |\nabla(\xi^2 \dhp \u)|\big )\dx
    \\
    & \le c(\eps ^{-1}, \norm{\xi}_{1,\infty})\int_{\Omega} \phi^*(|\f|) +  \phi(|\nabla\u|) \dx + \eps \, c\int_{\Omega}
      \phi\big (\abs{\xi \nabla \dhp\u }\big )\dx.
\end{align*}
In order to use Corollary~\ref{difOrlicz} we observe that
\begin{align}
  \label{eq:10}
  \begin{aligned}
    \abs{\dhpm \divo(\xi^2 \dhp \u)} &\le c(\norm{\xi}_{2,\infty}) \,\xi \, \abs {\dhp
      \u} + c(\norm{\xi}_{1,\infty}) \abs{\dhpm(\xi \, \dhp \u)},
    \\
    |\divo(\xi^2 \dhp \u)| &\le c(\norm{\xi}_{1,\infty}) \,\xi \, \abs{\dhp \u},
  \end{aligned}
\end{align}
where we also used that $\u$ is solenoidal.  Estimate \eqref{eq:yg},
Corollary~\ref{difOrlicz}(iii), inequalities \eqref{eq:10},
\eqref{eq:2}, \eqref{eq:1a} and the condition $\Delta_2(\psi)<\infty$ now yield the following:
\begin{align*}
    \bigabs{I_1} &\le c(\eps ^{-1}) \int_{Q} \phi(\abs {\bD  \u}) \dx + \eps \int_{\Omega} \phi\big (\abs{\nabla
      {\dhm\vv}}\big )\dx
    \\
    &\le c(\eps ^{-1}) \int_{Q} \phi(\abs {\Du}) \dx \\
    & \qquad + \eps\,C_4 \int_{\Omega} \phi\big (|\dhp \divo(\xi^2 \dhp \u)|\big )
      +  \phi\big (|\dhm \divo(\xi^2 \dhp \u)|\big )+ \phi\big (|\divo(\xi^2 \dhp \u)|\big )\dx
    \\
    & \le c(\eps ^{-1},C_4, \norm{\xi}_{2,\infty}) \int_{Q} \phi(\abs {\nabla \u}) \dx \\
    & \qquad + \eps\, C_4\, c(\norm{\xi}_{1,\infty})\int_{\Omega} \phi\big (\abs{\dhp (\xi\dhp\u )}\big )+ \phi\big (\abs{\dhm (\xi
      \dhp\u )}\big )\dx
    \\
    & \le c(\eps^{-1},C_4,\norm{\xi}_{2,\infty}) \int_{Q}   \phi(\abs {\nabla \u}) \dx + \eps\, C_4\,
    c(\norm{\xi}_{1,\infty})\int_{\Omega} \phi\big (\xi \abs{\nabla 
      \dhp\u }\big )\dx.
\end{align*}
Finally, using inequality \eqref{eq:young:1} with
$\Delta_2(\phi)<\infty$, estimate \eqref{eq:2}, Corollary 
\ref{difOrlicz}(ii) and inequality \eqref{eq:10}, one shows that
\begin{align*}
    \bigabs{I_4} 
    &\le c \int_{Q} \phi^*(|\f|) \dx + c \int_{\Omega} \phi\big (|\dhm\vv| \big)\dx    \\
    &\le c \int_{Q} \phi^*(|\f|) \dx + c \int_{Q} \phi\big (|\nabla\vv| \big)\dx   \\
    &\le c \int_{Q} \phi^*(|\f|) \dx + c \int_{Q} \phi\big (|\divo(\xi^2\, \dhp \u)| \big)\dx\\
    &\le c \int_{Q} \phi^*(|\f|) + \phi\big (|\nabla\bu| \big) \dx.
\end{align*}
Applying the obtained estimates of $|I_j|$ to \eqref{eq:tang}, we get
	\begin{align*}
          &\intO  \xi^2 \bigabs{ \dhp \bF (\Du) }^2  + \phi (\xi
            \abs{\nabla \dhp \u})\dx
          \\
          &\le c(\eps^{-1},C_4, \norm{\xi}_{2,\infty}) \int_Q
            {\phi^*}(|\f|) +\phi\big (|\nabla\bu| \big) \dx +
            \eps\,c(C_4,\norm{\xi}_{1,\infty})\int_{\Omega} \phi\big
            (|\xi \nabla	\dhp\u|\big )\dx.   
	\end{align*}
By choosing $\eps $ sufficiently small, the last term on the
right-hand side may be absorbed into the left-hand side of the
inequality. Hence, we have proved 
\begin{align*}
  \intO & \xi^2 \bigabs{ \dhp \bF (\Du) }^2 \dx 
          \le  c(\norm{\xi}_{2,\infty}) \int_Q {\phi^*}(|\f|) +\phi\big (|\nabla\bu| \big)\dx.
\end{align*}
In view of \eqref{eq:2a}, \eqref{eq:2b}, this implies the desired estimate \eqref{eq:est-reg-F2}.
\end{proof}

\begin{acknowledgements}
  We would like to thank the referees for their helpful comments. 
\end{acknowledgements}


\def\cprime{$'$}
\ifx\undefined\bysame
\newcommand{\bysame}{\leavevmode\hbox to3em{\hrulefill}\,}
\fi


\end{document}